\documentclass[hidelinks,onefignum,onetabnum]{siamart251216}

\usepackage{lipsum}
\usepackage{amsfonts}
\usepackage{graphicx}
\usepackage{epstopdf}
\ifpdf
  \DeclareGraphicsExtensions{.eps,.pdf,.png,.jpg}
\else
  \DeclareGraphicsExtensions{.eps}
\fi

\newsiamremark{remark}{Remark}
\newsiamremark{hypothesis}{Hypothesis}
\crefname{hypothesis}{Hypothesis}{Hypotheses}
\newsiamthm{claim}{Claim}
\newsiamremark{fact}{Fact}
\crefname{fact}{Fact}{Facts}

\headers{Gene regulatory networks from RNA velocity}{L. Meng and S. Wang}

\title{Decoding gene regulatory networks from single-cell RNA velocity\thanks{Submitted on August 30, 2026.
\funding{This work was funded by Tianyuan Mathematical Center in Northeast China and Yanbian University.}}}

\author{Lingqi Meng\thanks{Department of Mathematics, Yanbian University, Yanbian Korean Autonomous Prefecture, Jilin 133002, China
  (\email{lingqime@ybu.edu.cn}, \url{https://lingqime.github.io/}).}
\and Shiruo Wang\thanks{Department of Mathematics, State University of New York at Buffalo, Buffalo, New York 14260, United States 
  (\email{shiruomath@gmail.com}).}
}

\usepackage{amsopn}

\ifpdf
\hypersetup{
  pdftitle={Decoding gene regulatory networks from single-cell RNA velocity},
  pdfauthor={L. Meng and S. Wang}
}
\fi

\usepackage{algorithm}
\usepackage{algpseudocode}

\begin{document}

\maketitle

\begin{abstract}
We formulate gene regulatory network reconstruction from RNA velocity as a sparse dynamical inverse problem. We show that control-only data can be structurally nonidentifying and characterize excitation conditions under which controlled perturbations restore identifiability by generating complementary regulator trajectories. To enable stable reconstruction from noisy data, we develop an integral sparse estimator that avoids numerical differentiation and derive recovery bounds separating stochastic error from systematic contributions due to latent-time uncertainty, kinetic-parameter error, numerical quadrature, and model misspecification. Synthetic experiments illustrate perturbation-assisted identifiability, improved conditioning, and the robustness of integral reconstruction. Applied to perturbation-resolved RPE1 RNA-velocity data, the framework yields an empirically full-rank design whose conditioning improves with perturbational diversity and a reconstructed network core stable under perturbation subsampling. Held-out evaluation further shows that identifiability and reconstruction stability do not imply uniform predictive improvement. These results connect perturbational excitation, identifiability, and stable sparse recovery in regulatory dynamical systems.
\end{abstract}

\begin{keywords}
Gene regulatory networks; RNA velocity; sparse inverse problems; identifiability
\end{keywords}

\begin{MSCcodes}
34A55, 62J07
\end{MSCcodes}

\section{Introduction}

Single-cell RNA sequencing (scRNA-seq) provides snapshot measurements of heterogeneous cellular populations but does not directly observe their temporal evolution \cite{tang2009mrna, macosko2015highly, zheng2017massively}. RNA velocity augments these snapshots with directional information inferred from unspliced and spliced RNA abundances \cite{la2018rna}, with subsequent methods incorporating dynamical transcription models, latent biological time, and gene-specific kinetic parameters \cite{bergen2020generalizing, gorin2022rna}. These developments raise the natural inverse problem of determining to what extent the underlying gene regulatory network (GRN) can be recovered from RNA-velocity observations.

GRN reconstruction has been studied using statistical, causal, and dynamical-system approaches \cite{huynh2010inferring, marbach2012wisdom}. More recently, RNA-velocity-based methods have combined estimated dynamics with sparse regression, ordinary differential equations, and causal or mechanistic models to infer regulatory interactions \cite{aubin2020gene, qiu2020inferring, singh2024causal}. These methods primarily concern estimation of regulatory structure. A logically prior question is whether the regulatory parameters are identifiable from the available observations. Distinct parameter values may generate indistinguishable observations even in the absence of noise, in which case no reconstruction method can uniquely recover the underlying network.

Identifiability and the role of informative inputs have long been studied in dynamical systems and system identification \cite{walter1997identification, hermann1977nonlinear, ljung1998system, villaverde2019observability}. Related work on biological networks has shown that experimental perturbations can improve parameter identifiability by supplying additional dynamical information \cite{zak2001simulation, gross2020identifiability}. The RNA-velocity setting, however, differs from standard time-series identification because observations are destructive snapshots indexed by latent biological time, transcription and splicing are coupled, and multiple perturbation conditions generate distinct regulator trajectories. Existing results therefore do not directly characterize which regulatory directions are invisible from control RNA-velocity observations, when perturbations remove these ambiguities, or how errors in reconstructed trajectories and kinetic parameters affect network recovery. Perturbation-based single-cell experiments provide a natural source of excitation for this inverse problem \cite{dixit2016perturb, adamson2016multiplexed}. By altering regulator activity, these perturbations can generate complementary trajectories that reveal parameter directions absent from the control condition.

We formulate GRN reconstruction from RNA velocity as a sparse dynamical inverse problem arising from coupled transcription--splicing dynamics. We show constructively that control-only observations can be structurally nonidentifying because distinct regulatory parameters may induce identical transcriptional forcing along the observed regulator trajectory. We then characterize how controlled perturbations restore identifiability by enlarging the observable feature space. For the linear transcription model, exact identifiability is equivalent to positive definiteness of an aggregated population information matrix, while for nonlinear regulation an analogous sensitivity condition gives local first-order identifiability. These results also distinguish structural identifiability from numerical conditioning.

Reconstruction from noisy snapshot data presents a separate difficulty because direct differential-equation fitting requires numerical differentiation of estimated trajectories. Integral and weak-form approaches have been developed in dynamical-system identification to mitigate this instability \cite{schaeffer2017sparse, messenger2021weak}. Motivated by this idea, we derive an integral formulation of the transcription dynamics that avoids numerical differentiation while preserving linear dependence on the unknown regulatory coefficients. The resulting problem is a sparse linear inverse problem. We derive finite-sample recovery guarantees, conditional on the estimated trajectories and kinetic parameters, that separate stochastic error from persistent contributions associated with latent-time uncertainty, kinetic-parameter error, numerical quadrature, and model misspecification.

We illustrate the theory through synthetic experiments and apply the reconstruction framework to perturbation-resolved RPE1 RNA-velocity data. The experiments examine the relation between perturbational diversity, identifiability and conditioning, reconstruction stability, and held-out predictive generalization.

The principal contributions of this paper are as follows.

\begin{enumerate}
\item We characterize control-only nonidentifiability geometrically by identifying regulatory parameter directions that are observationally invisible along the control trajectory.

\item We establish perturbation-assisted identifiability conditions. For the linear transcription model, positive definiteness of an aggregated information matrix is necessary and sufficient for exact identifiability; for nonlinear regulation, a sensitivity condition yields local first-order identifiability.

\item We develop an integral sparse reconstruction method that avoids numerical differentiation and derive finite-sample recovery guarantees accounting for stochastic error, latent-time uncertainty, kinetic-parameter error, numerical quadrature, and model misspecification.

\item We validate the framework using synthetic experiments and perturbation-resolved RPE1 RNA-velocity data, examining empirical identifiability, conditioning, reconstruction stability, and held-out predictive generalization.
\end{enumerate}

The remainder of the paper is organized as follows. Section~\ref{Mathematical formulation} introduces the regulatory RNA-velocity model and observation framework. Section~\ref{Nonidentifiability of the control-only inverse problem} establishes control-only nonidentifiability, and Section~\ref{Intervention-assisted identifiability} develops perturbation-assisted identifiability theory. Sections~\ref{Integral sparse reconstruction} and~\ref{Finite-sample recovery theory} introduce the integral sparse estimator and its finite-sample recovery theory, respectively. Section~\ref{Numerical experiments} presents the synthetic experiments and the perturbation-resolved RPE1 real-data application.

\section{Mathematical formulation}
\label{Mathematical formulation}

\subsection{Gene-regulatory RNA-velocity model}

Consider $G$ genes and a prescribed subset $\mathcal R\subset\{1,\ldots,G\}$ of $K=|\mathcal R|$ candidate transcriptional regulators. For each experimental condition $q\in\{0,\ldots,Q\}$ and biological time $t\ge0$, let $u^{(q)}(t),s^{(q)}(t)\in\mathbb R_+^G$ denote the unspliced and spliced RNA abundances, respectively, and let $s_{\mathcal R}^{(q)}(t)\in\mathbb R_+^K$ denote the regulator state. Define the splicing and degradation matrices $B=\operatorname{diag}(\beta_1,\ldots,\beta_G)$ and $\Gamma=\operatorname{diag}(\gamma_1,\ldots,\gamma_G)$, where $\beta_g,\gamma_g>0$.

We consider the coupled transcription--splicing dynamics
\begin{align}
\dot u^{(q)}(t) &= a\bigl(s_{\mathcal R}^{(q)}(t),q\bigr)-Bu^{(q)}(t), \label{eq:uv-u}\\
\dot s^{(q)}(t) &= Bu^{(q)}(t)-\Gamma s^{(q)}(t), \label{eq:uv-s}
\end{align}
where $a=(a_1,\ldots,a_G)^\top$ denotes the transcription-rate function. Throughout the main theoretical development, we use the linear model
\begin{equation}
a_g\bigl(s_{\mathcal R}^{(q)},q\bigr)=c_g+h_g(q)+A_{g\cdot}M_qs_{\mathcal R}^{(q)}, \label{eq:linear-transcription}
\end{equation}
where $A\in\mathbb R^{G\times K}$ is the unknown regulatory matrix, $A_{g\cdot}$ denotes its $g$th row, $c_g$ is the basal transcription rate, $h_g(q)$ is a direct intervention effect treated as known in the theoretical analysis with $h_g(0)=0$, and $M_q:\mathbb R^K\to\mathbb R^K$ is a known perturbation operator specified by the intervention design and describes the effect of condition $q$ on regulator activity.

The entry $A_{gj}$ is positive for activation, negative for repression, and zero in the absence of a direct regulatory effect. We assume that $A$ is sparse. The known operator $M_q$ encodes the direct action of the intervention on regulator activity before it enters the transcriptional response. Thus, $s_{\mathcal R}^{(q)}(t)$ denotes the condition-specific regulator state, whereas $M_qs_{\mathcal R}^{(q)}(t)$ represents the effective regulator activity entering the transcriptional response. The framework accommodates, for example, transcription-factor knockout, knockdown, overexpression, and pharmacological inhibition. We take $M_0=I_K$, where $I_K$ is the $K\times K$ identity matrix, for the unperturbed control condition.

\subsection{Snapshot observation model}

For $N$ observed cells indexed by $i=1,\ldots,N$, let
\begin{equation}
Y_i=x^{(q_i)}(T_i)+\varepsilon_i,\qquad x^{(q)}(t)=\begin{pmatrix}u^{(q)}(t)\\ s^{(q)}(t)\end{pmatrix}, \label{eq:observation}
\end{equation}
where $Y_i\in\mathbb R^{2G}$ is the observed RNA snapshot, $\varepsilon_i\in\mathbb R^{2G}$ is the observation error, $q_i\in\{0,\ldots,Q\}$ is the experimental condition, $T_i\in[0,T_{\max}]$ is the latent biological time, with $T_{\max}>0$ denoting the maximal biological time considered, and $\mathbb E[\varepsilon_i\mid T_i,q_i]=0$. Because cells are measured destructively, each cell provides only a single snapshot, and the latent times must be estimated from the data.

\subsection{Forward and inverse problems}

Let $c=(c_1,\ldots,c_G)^\top$. With the kinetic parameters, intervention specification, and initial conditions fixed, the coupled dynamics define the parameter-to-state map
\begin{equation}
\mathcal F:(A,c)\longmapsto\{x^{(q)}(t)\}_{q=0}^Q.
\end{equation}
Composing $\mathcal F$ with the snapshot observation operator $\mathcal O$ gives the parameter-to-data map $\mathcal G=\mathcal O\circ\mathcal F$. Thus, schematically,
\begin{equation}
Y=\mathcal G(A,c)+\varepsilon,
\end{equation}
where $Y$ and $\varepsilon$ denote the collections of observations and observation errors, respectively.

Our objective is to recover the sparse regulatory matrix $A$ from the noisy snapshot observations $\{Y_i,q_i\}_{i=1}^N$. The problem combines snapshot observations of latent trajectories, latent biological time, and multiple perturbation conditions. The following sections characterize control-only nonidentifiability, determine when perturbations restore identifiability, and develop a sparse integral reconstruction method with finite-sample guarantees.

\section{Nonidentifiability of the control-only inverse problem}
\label{Nonidentifiability of the control-only inverse problem}

We first ask whether the regulatory matrix can be uniquely recovered from unperturbed RNA-velocity observations. Throughout this section, we restrict to the control condition $q=0$. The question is whether the regulatory parameters are uniquely determined by the transcriptional forcing along the observed control trajectory. If distinct regulatory parameters induce the same forcing along that trajectory, then they cannot be distinguished from control observations alone.

\subsection{Control regulator trajectory}

Let $s_{\mathcal R}^{(0)}(t)\in\mathbb R^K$ denote the regulator trajectory under the control condition, and define
\[
\mathcal M_0=\{s_{\mathcal R}^{(0)}(t):0\le t\le T_{\max}\}\subset\mathbb R^K.
\]
For a target gene $g$, the transcriptional forcing along this trajectory is
\[
f_g^{(0)}(t)=c_g+A_{g\cdot}s_{\mathcal R}^{(0)}(t).
\]
Thus, control observations determine the affine functional $s\mapsto c_g+A_{g\cdot}s$ only through its restriction to $\mathcal M_0$.

\subsection{Observational equivalence}

\begin{definition}[Observational equivalence]
Two parameter pairs $(c_g,A_{g\cdot})$ and $(\widetilde c_g,\widetilde A_{g\cdot})$ are observationally equivalent under the control condition if
\[
c_g+A_{g\cdot}s_{\mathcal R}^{(0)}(t)
=
\widetilde c_g+\widetilde A_{g\cdot}s_{\mathcal R}^{(0)}(t)
\]
for all $t\in[0,T_{\max}]$.
\end{definition}

Equivalently, with $\Delta c_g=\widetilde c_g-c_g$ and $\Delta A_{g\cdot}=\widetilde A_{g\cdot}-A_{g\cdot}$,
\[
\Delta c_g+\Delta A_{g\cdot}s_{\mathcal R}^{(0)}(t)=0
\qquad\text{for all }t\in[0,T_{\max}].
\]

\subsection{Constructive nonidentifiability}

Define the augmented feature vector
\[
\phi^{(0)}(t)=
\begin{pmatrix}
1\\
s_{\mathcal R}^{(0)}(t)
\end{pmatrix}
\in\mathbb R^{K+1},
\qquad
\mathcal S_0=\operatorname{span}\{\phi^{(0)}(t):0\le t\le T_{\max}\}.
\]

\begin{theorem}[Constructive control-only nonidentifiability]
If $\dim(\mathcal S_0)<K+1$, then for every target gene $g$ there exists a nonzero perturbation $(\Delta c_g,\Delta A_{g\cdot})$ such that
\[
\Delta c_g+\Delta A_{g\cdot}s_{\mathcal R}^{(0)}(t)=0
\qquad\text{for all }t\in[0,T_{\max}].
\]
Hence $(c_g,A_{g\cdot})$ and $(c_g+\Delta c_g,A_{g\cdot}+\Delta A_{g\cdot})$ are observationally equivalent under the control condition and cannot be distinguished from control observations alone.
\end{theorem}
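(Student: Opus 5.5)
The argument is a direct linear-algebra one: the key point is that $\mathcal S_0$ is a proper subspace of $\mathbb R^{K+1}$, so its orthogonal complement is nontrivial. I will write the perturbation as a single vector
\[
w=\begin{pmatrix}\Delta c_g\\ \Delta A_{g\cdot}^\top\end{pmatrix}\in\mathbb R^{K+1}.
\]
With this notation,
\[
\Delta c_g+\Delta A_{g\cdot}s_{\mathcal R}^{(0)}(t)=w^\top\phi^{(0)}(t),
\]
because the first coordinate of $\phi^{(0)}(t)$ is $1$. The required identity is therefore equivalent to $w^\top\phi^{(0)}(t)=0$ for all $t\in[0,T_{\max}]$. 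By linearity, this holds if and only if $w\in\mathcal S_0^\perp$, since $\mathcal S_0$ is spanned by these feature vectors.

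The construction then proceeds in three steps.

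\begin{enumerate}
\item Since $\mathcal S_0$ is a subspace of the finite-dimensional space $\mathbb R^{K+1}$, we have $\dim\mathcal S_0^\perp=K+1-\dim\mathcal S_0\ge1$. The hypothesis gives a nonzero $w\in\mathcal S_0^\perp$.
\item Partition $w$ as above to define $\Delta c_g=w_1$ and $\Delta A_{g\cdot}=(w_2,\ldots,w_{K+1})$. This pair is nonzero because $w\neq0$.
\item Verify that $w^\top\phi^{(0)}(t)=0$ for every $t$. Adding this zero to $c_g+A_{g\cdot}s_{\mathcal R}^{(0)}(t)$ shows that the perturbed and original parameters give identical forcing $f_g^{(0)}(t)$ on $[0,T_{\max}]$. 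This is exactly observational equivalence in the sense of the definition.
\end{enumerate}

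Note that $\mathcal S_0$, and hence $w$, depends only on the regulator trajectory and not on $g$. The same $w$ therefore works simultaneously for every target gene $g$, which gives the "for every target gene" clause.

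For the final sentence of the statement ("cannot be distinguished from control observations alone"), I will argue as follows. The control dynamics \eqref{eq:uv-u}--\eqref{eq:uv-s}, with $q=0$, $h_g(0)=0$ and $M_0=I_K$, depend on $(c_g,A_{g\cdot})$ only through $f_g^{(0)}$. I would also check a subtlety here: $s_{\mathcal R}^{(0)}$ might itself depend on $A$ if some regulator is also a target. Perturbing all rows simultaneously by the same null direction leaves every forcing unchanged along the original trajectory. That original trajectory then still solves the ODE system, and by uniqueness of solutions with fixed initial conditions the trajectories coincide. Hence $\mathcal F$, and thus $\mathcal G$, takes equal values at the two parameter pairs.

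No step is a serious obstacle. The only point needing care is this last consistency argument, namely that the trajectory is a fixed point under the perturbation; ODE uniqueness settles it because the right-hand side is linear and hence Lipschitz.
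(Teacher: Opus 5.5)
Your proof is correct and matches the paper's argument: you encode the perturbation as a vector in $\mathbb R^{K+1}$, note that the required identity is equivalent to orthogonality to every $\phi^{(0)}(t)$, and then pick a nonzero element of the nontrivial complement $\mathcal S_0^\perp$. Your extra ODE-uniqueness step (the original control trajectory still solves the perturbed system, so the control states and data coincide) is valid and goes beyond the paper, which simply takes identical forcing along the trajectory as its definition of observational equivalence; two small corrections are that it yields equality only of the control ($q=0$) component of $\mathcal F$, not of $\mathcal F$ as a whole, and that perturbing row $g$ alone already makes the same fixed-point argument work even when $g\in\mathcal R$, so you need not perturb all rows.
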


\begin{proof}
Let
\[
\Delta\theta_g=
\begin{pmatrix}
\Delta c_g\\
\Delta A_{g\cdot}^{\top}
\end{pmatrix}
\in\mathbb R^{K+1}.
\]
Since $\dim(\mathcal S_0)<K+1$, the orthogonal complement $\mathcal S_0^\perp$ is nontrivial. Choose any nonzero $\Delta\theta_g\in\mathcal S_0^\perp$. Then
\[
\langle\Delta\theta_g,\phi^{(0)}(t)\rangle=0
\qquad\text{for all }t\in[0,T_{\max}],
\]
which is equivalent to
\[
\Delta c_g+\Delta A_{g\cdot}s_{\mathcal R}^{(0)}(t)=0
\qquad\text{for all }t\in[0,T_{\max}].
\]
Thus the perturbed and original parameters produce identical transcriptional forcing along the control trajectory.
\end{proof}

The theorem shows that control-only nonidentifiability is geometric rather than statistical. Any parameter direction in $\mathcal S_0^\perp$ is invisible along the control trajectory, so no estimator or regularization method can identify that component from control data alone without additional structural assumptions. Additional experimental information is therefore required.

\section{Intervention-assisted identifiability}
\label{Intervention-assisted identifiability}

We now characterize how controlled perturbations can restore identifiability. Measurements are assumed to be available under conditions $q=0,\ldots,Q$, each generating a regulator trajectory $s_{\mathcal R}^{(q)}(t)$. The key question is whether these trajectories collectively excite all regulatory parameter directions.

\subsection{Aggregated feature space}

For each condition $q$, define
\[
\phi^{(q)}(t)=
\begin{pmatrix}
1\\
M_qs_{\mathcal R}^{(q)}(t)
\end{pmatrix}
\in\mathbb R^{K+1},
\qquad
\mathcal S_q=\operatorname{span}\{\phi^{(q)}(t):0\le t\le T_{\max}\}.
\]
The aggregated feature space is
\[
\mathcal S_{\mathrm{all}}
=
\operatorname{span}\bigcup_{q=0}^Q\mathcal S_q
=
\operatorname{span}\{\phi^{(q)}(t):q=0,\ldots,Q,\ t\in[0,T_{\max}]\}.
\]
Thus, perturbations restore identifiability precisely when the combined trajectories span the full parameter space.

\subsection{Exact identifiability}

Define the population information matrix
\[
I
=
\sum_{q=0}^Q
\int_0^{T_{\max}}
\phi^{(q)}(t)\phi^{(q)}(t)^\top w_q(t)\,dt,
\qquad w_q(t)>0.
\]
For any $v\in\mathbb R^{K+1}$,
\[
v^\top Iv
=
\sum_{q=0}^Q
\int_0^{T_{\max}}
\bigl(v^\top\phi^{(q)}(t)\bigr)^2w_q(t)\,dt,
\]
and therefore
\[
I\succ0
\quad\Longleftrightarrow\quad
\mathcal S_{\mathrm{all}}=\mathbb R^{K+1}.
\]

\begin{theorem}[Intervention-assisted identifiability]
Suppose that the state trajectories $u^{(q)}(t)$ and $s^{(q)}(t)$ are known for $q=0,\ldots,Q$, and that the kinetic parameters, perturbation operators, and direct intervention effects are known. Then, for each target gene $g$, the linear regulatory parameters $(c_g,A_{g\cdot})$ are uniquely identifiable if and only if
\[
I\succ0.
\]
Equivalently, identifiability holds if and only if $\mathcal S_{\mathrm{all}}=\mathbb R^{K+1}$.
\end{theorem}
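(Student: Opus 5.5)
The plan is to reduce identifiability to injectivity of a linear map from the parameter vector $\theta_g=(c_g,A_{g\cdot})^\top\in\mathbb R^{K+1}$ to the transcriptional forcing along all observed trajectories. We then identify the kernel of that map with $\mathcal S_{\mathrm{all}}^\perp$.

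\textbf{Step 1: recover the forcing from the known states.} Since $u^{(q)}$ and $s^{(q)}$ are known, and so are $B$, $\Gamma$, $M_q$ and $h_g(q)$, equation \eqref{eq:uv-u} determines
\[
f_g^{(q)}(t):=\dot u_g^{(q)}(t)+\beta_g u_g^{(q)}(t)-h_g(q)=c_g+A_{g\cdot}M_qs_{\mathcal R}^{(q)}(t)=\langle\theta_g,\phi^{(q)}(t)\rangle
\]
for all $q$ and $t\in[0,T_{\max}]$. Conversely, the data depend on $\theta_g$ only through these functions. Given the forcing, the known initial conditions and the known kinetics, the ODE \eqref{eq:uv-u}--\eqref{eq:uv-s} has a unique solution. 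Hence two parameter vectors produce the same trajectories if and only if they produce the same forcing for every $q$ and $t$. This extends the observational-equivalence notion of the previous section to all conditions. The point to handle carefully is that the regulator states $s_{\mathcal R}^{(q)}$ also enter through feedback. Because they are treated as known, fixed trajectories, the dependence on $\theta_g$ is genuinely linear, and no circularity arises.

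\textbf{Step 2: the kernel is $\mathcal S_{\mathrm{all}}^\perp$.} By linearity, $\theta_g$ and $\tilde\theta_g$ are observationally equivalent if and only if $\Delta\theta_g=\tilde\theta_g-\theta_g$ satisfies $\langle\Delta\theta_g,\phi^{(q)}(t)\rangle=0$ for all $q$ and $t$. This holds if and only if $\Delta\theta_g\in\mathcal S_{\mathrm{all}}^\perp$.
\begin{itemize}
\item \emph{Sufficiency.} If $\mathcal S_{\mathrm{all}}=\mathbb R^{K+1}$, the complement is $\{0\}$, so the parameters are unique.
\item \emph{Necessity.} If $\mathcal S_{\mathrm{all}}\neq\mathbb R^{K+1}$, choose a nonzero $\Delta\theta_g\in\mathcal S_{\mathrm{all}}^\perp$ and apply the constructive argument of the control-only theorem to all conditions simultaneously. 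This yields distinct, observationally equivalent parameters.
\end{itemize}

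\textbf{Step 3: equivalence with $I\succ0$.} This is the displayed quadratic-form identity, which I would verify carefully because it is the main technical point. If $v\perp\mathcal S_{\mathrm{all}}$, then $v^\top Iv=0$, so $I$ is not positive definite. Conversely, suppose $v^\top Iv=0$ with $v\neq0$. Then $(v^\top\phi^{(q)}(t))^2w_q(t)=0$ for almost every $t$. Since $w_q>0$, this gives $v^\top\phi^{(q)}(t)=0$ almost everywhere.

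Upgrading "almost everywhere" to "everywhere" requires continuity of $t\mapsto\phi^{(q)}(t)$. This holds because $s_{\mathcal R}^{(q)}$ solves an ODE and $M_q$ is linear, hence continuous. With continuity, $v^\top\phi^{(q)}(t)=0$ for all $t$, so $v\in\mathcal S_{\mathrm{all}}^\perp$. I would also require the $w_q$ to be integrable, so that $I$ is finite. Combining Steps 2 and 3 gives the theorem.
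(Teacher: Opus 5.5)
Your proposal is correct and follows essentially the same argument as the paper. Observational equivalence reduces to $v^\top\phi^{(q)}(t)=0$ for all $q$ and $t$, and the quadratic-form identity for $I$ links this to positive definiteness. Your extra care is a refinement rather than a different route: you use ODE uniqueness to show the data depend on $\theta_g$ only through the forcing, and you use continuity of $\phi^{(q)}$ to upgrade almost-everywhere vanishing to everywhere, both of which the paper's proof passes over silently.
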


\begin{proof}
Suppose that $(c_g,A_{g\cdot})$ and $(\widetilde c_g,\widetilde A_{g\cdot})$ produce the same transcriptional forcing along every observed trajectory. With
\[
v=
\begin{pmatrix}
\widetilde c_g-c_g\\
(\widetilde A_{g\cdot}-A_{g\cdot})^\top
\end{pmatrix},
\]
observational equivalence is equivalent to
\[
v^\top\phi^{(q)}(t)=0
\qquad
\text{for all }q=0,\ldots,Q,\quad t\in[0,T_{\max}].
\]
Hence $Iv=0$.

If $I\succ0$, then $v=0$, so the two parameter pairs coincide. Conversely, if $I$ is singular, there exists $v\neq0$ with $Iv=0$. Since
\[
0=v^\top Iv
=
\sum_{q=0}^Q
\int_0^{T_{\max}}
\bigl(v^\top\phi^{(q)}(t)\bigr)^2w_q(t)\,dt
\]
and $w_q(t)>0$, we have $v^\top\phi^{(q)}(t)=0$ for all $q$ and $t$. The corresponding nonzero parameter perturbation is therefore observationally invisible under every perturbation condition.
\end{proof}

\subsection{Local nonlinear identifiability}

For a nonlinear transcription model
\[
a_g^{(q)}(t;\theta_g)
=
a_g\bigl(s_{\mathcal R}^{(q)}(t),q;\theta_g\bigr),
\]
define the sensitivity vector
\[
\psi_g^{(q)}(t)
=
\frac{\partial a_g^{(q)}(t;\theta_g)}{\partial\theta_g}
\]
and the sensitivity information matrix
\[
I_g^{\mathrm{sens}}
=
\sum_{q=0}^Q
\int_0^{T_{\max}}
\psi_g^{(q)}(t)\psi_g^{(q)}(t)^\top w_q(t)\,dt.
\]

\begin{proposition}[Local first-order identifiability]
Suppose that the transcription map is continuously differentiable with respect to $\theta_g$. If
\[
\lambda_{\min}\bigl(I_g^{\mathrm{sens}}\bigr)>0,
\]
then no nonzero infinitesimal parameter perturbation leaves the transcriptional forcing unchanged along all observed perturbation trajectories.
\end{proposition}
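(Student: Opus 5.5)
We mirror the argument used for the Intervention-assisted identifiability theorem, with the sensitivity vector $\psi_g^{(q)}(t)$ playing the role of the feature vector $\phi^{(q)}(t)$. The idea is to first make precise what an infinitesimal perturbation leaving the forcing unchanged means, and then show that such a perturbation lies in the kernel of $I_g^{\mathrm{sens}}$.

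\emph{Linearization.} Fix a nonzero direction $\delta\in\mathbb R^{\dim\theta_g}$ and consider the curve $\theta_g+\epsilon\delta$. Continuous differentiability of the transcription map gives, for each $q$ and $t$,
\[
a_g^{(q)}(t;\theta_g+\epsilon\delta)-a_g^{(q)}(t;\theta_g)=\epsilon\,\psi_g^{(q)}(t)^\top\delta+o(\epsilon).
\]
We therefore call $\delta$ a first-order invisible direction if the derivative $\frac{d}{d\epsilon}a_g^{(q)}(t;\theta_g+\epsilon\delta)\big|_{\epsilon=0}=\psi_g^{(q)}(t)^\top\delta$ vanishes for all $q\in\{0,\ldots,Q\}$ and $t\in[0,T_{\max}]$. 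This is the precise meaning of an infinitesimal perturbation leaving the forcing unchanged.

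\emph{Quadratic form argument.} Suppose such a $\delta\neq0$ exists. Then
\[
\delta^\top I_g^{\mathrm{sens}}\delta=\sum_{q=0}^Q\int_0^{T_{\max}}\bigl(\psi_g^{(q)}(t)^\top\delta\bigr)^2w_q(t)\,dt=0.
\]
But $\lambda_{\min}(I_g^{\mathrm{sens}})>0$ implies $\delta^\top I_g^{\mathrm{sens}}\delta\ge\lambda_{\min}(I_g^{\mathrm{sens}})\|\delta\|^2>0$, which is a contradiction. So $\delta=0$, and no nonzero infinitesimal perturbation leaves the forcing unchanged along all observed perturbation trajectories.

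\emph{Well-definedness and the converse.} We need $I_g^{\mathrm{sens}}$ to be well defined, so we check that $t\mapsto\psi_g^{(q)}(t)$ is measurable and square-integrable against $w_q$. This follows from continuity of $\partial a_g/\partial\theta_g$ together with the continuity, hence boundedness, of the trajectories $s_{\mathcal R}^{(q)}$ on the compact interval $[0,T_{\max}]$, assuming $w_q$ is integrable. The converse-type reasoning from the linear theorem also transfers: if the integral vanishes with $w_q>0$ and $\psi$ continuous, the integrand vanishes pointwise. That direction is not needed here, however.

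\emph{Main obstacle.} The main point requiring care is interpretive rather than technical. We must state that the conclusion concerns first-order (infinitesimal) invisibility only, not local injectivity. Upgrading to genuine local identifiability would require an inverse-function-type argument: a positive-definite $I_g^{\mathrm{sens}}$ together with continuity of $\psi$ in $\theta_g$ keeps the weighted $L^2$ distance between forcings bounded below by $c\|\tilde\theta_g-\theta_g\|$ in a neighborhood. We would remark on this but not rely on it.
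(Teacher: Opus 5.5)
Your proposal is correct and follows essentially the same argument as the paper. Both rest on the identity $v^\top I_g^{\mathrm{sens}}v=\sum_{q}\int_0^{T_{\max}}(v^\top\psi_g^{(q)}(t))^2w_q(t)\,dt$, where positive definiteness rules out any nonzero direction in the null space of the linearized parameter-to-forcing map; your explicit linearization step and well-definedness remarks only make precise what the paper leaves implicit.
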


\begin{proof}
For any parameter direction $v$,
\[
v^\top I_g^{\mathrm{sens}}v
=
\sum_{q=0}^Q
\int_0^{T_{\max}}
\bigl(v^\top\psi_g^{(q)}(t)\bigr)^2w_q(t)\,dt.
\]
If $I_g^{\mathrm{sens}}\succ0$, this quantity is positive for every $v\neq0$. Hence no nonzero parameter direction lies in the null space of the linearized parameter-to-forcing map.
\end{proof}

\subsection{Practical identifiability and conditioning}
\label{Practical identifiability and conditioning}

Structural identifiability guarantees uniqueness but not numerical stability. Stability is governed by the spectrum of $I$. A small $\lambda_{\min}(I)$ indicates a weakly excited parameter direction and strong amplification of observational error, whereas a larger $\lambda_{\min}(I)$ corresponds to better conditioning. Equivalently, a large spectral condition number
\[
\operatorname{cond}_2(I)
=
\frac{\lambda_{\max}(I)}{\lambda_{\min}(I)}
\]
indicates near-nonidentifiability. These quantities therefore connect perturbation-induced excitation with the stability of subsequent parameter reconstruction.

\section{Integral sparse reconstruction}
\label{Integral sparse reconstruction}

We now turn from identifiability to reconstruction from noisy snapshot observations. Direct use of the differential equation requires estimating time derivatives of reconstructed trajectories, which can strongly amplify measurement and latent-time errors. We therefore use an equivalent integral formulation that avoids numerical differentiation.

\subsection{Integral reformulation}

For a target gene $g$ under perturbation condition $q$, the transcription dynamics satisfy
\begin{equation}
\dot u_g^{(q)}(t)
=
c_g+h_g(q)+A_{g\cdot}M_qs_{\mathcal R}^{(q)}(t)-\beta_gu_g^{(q)}(t).
\label{eq:target-transcription}
\end{equation}
Integrating \eqref{eq:target-transcription} over $[t_a,t_b]\subset[0,T_{\max}]$ gives
\begin{align}
u_g^{(q)}(t_b)-u_g^{(q)}(t_a)
+\beta_g\int_{t_a}^{t_b}u_g^{(q)}(t)\,dt
-\int_{t_a}^{t_b}h_g(q)\,dt
&=
c_g(t_b-t_a)
+
A_{g\cdot}\int_{t_a}^{t_b}M_qs_{\mathcal R}^{(q)}(t)\,dt.
\label{eq:integral-identity}
\end{align}
Thus the unknown regulatory parameters remain linear after integration. With
\[
\theta_g^\star=
\begin{pmatrix}
c_g\\
A_{g\cdot}^\top
\end{pmatrix},
\qquad
\Phi_q(t_a,t_b)=
\begin{pmatrix}
t_b-t_a\\
\displaystyle\int_{t_a}^{t_b}M_qs_{\mathcal R}^{(q)}(t)\,dt
\end{pmatrix},
\]
the integral identity can be written as
\[
\mathcal Y_g(t_a,t_b,q)
=
\Phi_q(t_a,t_b)^\top\theta_g^\star,
\]
where $\mathcal Y_g(t_a,t_b,q)$ denotes the left-hand side of \eqref{eq:integral-identity}. The dynamical reconstruction problem is thereby reduced to a linear inverse problem involving trajectory integrals rather than time derivatives.

\subsection{Empirical inverse problem}

In practice, the trajectories and kinetic parameters are estimated from snapshot data. Let $\widehat u_g^{(q)}(t)$, $\widehat s_{\mathcal R}^{(q)}(t)$, and $\widehat\beta_g$ denote these estimates. For an integration interval $[t_r,t_{r+1}]$, define
\begin{align}
\widehat y_{g,r}^{(q)}
&=
\widehat u_g^{(q)}(t_{r+1})
-\widehat u_g^{(q)}(t_r)
+\widehat\beta_g\int_{t_r}^{t_{r+1}}\widehat u_g^{(q)}(t)\,dt
-\int_{t_r}^{t_{r+1}}h_g(q)\,dt,
\label{eq:empirical-response}\\
\widehat x_r^{(q)}
&=
\begin{pmatrix}
t_{r+1}-t_r\\
\displaystyle\int_{t_r}^{t_{r+1}}M_q\widehat s_{\mathcal R}^{(q)}(t)\,dt
\end{pmatrix}
\in\mathbb R^{K+1}.
\label{eq:empirical-feature}
\end{align}
Stacking these quantities over perturbation conditions and integration intervals gives
\begin{equation}
\widehat y_g
=
\widehat X\theta_g^\star+\xi_g,
\label{eq:empirical-linear-model}
\end{equation}
where $\xi_g$ collects errors induced by trajectory estimation, latent-time uncertainty, kinetic-parameter estimation, numerical quadrature, measurement noise, and model misspecification. Thus both the response and the design matrix are constructed from estimated dynamical quantities.

\subsection{Sparse reconstruction}

Because the regulatory network is assumed sparse, we estimate $\theta_g^\star$ using
\begin{equation}
\widehat\theta_g
=
\arg\min_{\theta\in\mathbb R^{K+1}}
\left\{
\frac{1}{2M}\|\widehat y_g-\widehat X\theta\|_2^2
+
\lambda\|\theta_{-0}\|_1
\right\},
\label{eq:integral-lasso}
\end{equation}
where $M$ is the total number of stacked integral equations and $\theta_{-0}$ denotes the regulatory coefficients excluding the basal-transcription coefficient. The basal transcription rate is therefore unpenalized, while the $\ell_1$ penalty promotes sparsity in the regulatory interactions.

The reconstruction is conditional on upstream estimates of the latent trajectories and kinetic parameters; no particular trajectory-estimation procedure is assumed. Their estimation errors enter through the empirical design and effective error and are quantified in the recovery analysis below. The complete reconstruction procedure is summarized in Algorithm~\ref{alg:reconstruction}.

\begin{algorithm}[t]
\caption{Conditional integral sparse reconstruction}
\label{alg:reconstruction}
\begin{algorithmic}[1]
\Require Estimated trajectories $\{\widehat u^{(q)}(t),\widehat s_{\mathcal R}^{(q)}(t)\}_{q=0}^{Q}$, kinetic parameters $\{\widehat\beta_g\}_{g=1}^{G}$, perturbation operators $\{M_q\}_{q=0}^{Q}$, direct intervention effects $\{h_g(q)\}$, integration intervals $\{[t_r,t_{r+1}]\}_{r=1}^{R}$, and regularization parameter $\lambda$
\Ensure Estimated regulatory matrix $\widehat A$
\For{each target gene $g$}
    \For{each condition $q$ and interval $[t_r,t_{r+1}]$}
        \State Construct $\widehat y_{g,r}^{(q)}$ and $\widehat x_r^{(q)}$ using \eqref{eq:empirical-response}--\eqref{eq:empirical-feature}.
    \EndFor
    \State Stack the responses and features to form $\widehat y_g$ and $\widehat X$.
    \State Compute $\widehat\theta_g$ from \eqref{eq:integral-lasso}.
    \State Extract $\widehat c_g$ and $\widehat A_{g\cdot}$ from $\widehat\theta_g$.
\EndFor
\State Assemble $\widehat A$ from $\{\widehat A_{g\cdot}\}_{g=1}^{G}$.
\State \Return $\widehat A$
\end{algorithmic}
\end{algorithm}

\subsection{Effective error}

For subsequent analysis, we conceptually decompose the effective error as
\[
\xi_g
=
\xi_{g,\mathrm{count}}
+
\xi_{g,\mathrm{time}}
+
\xi_{g,\mathrm{kin}}
+
\xi_{g,\mathrm{quad}}
+
\xi_{g,\mathrm{model}},
\]
corresponding respectively to measurement noise, latent-time error, kinetic-parameter error, quadrature error, and model misspecification. The recovery theory requires control of their aggregate contribution through the score condition
\begin{equation}
\left\|
\frac{1}{M}\widehat X^\top\xi_g
\right\|_\infty
\le
\frac{\lambda}{2}.
\label{eq:effective-score}
\end{equation}
This decomposition is conceptual and need not be unique; the individual components need not be separately observable. It is used to distinguish stochastic errors that may decrease with increasing information from systematic errors that can persist.

\section{Finite-sample recovery theory}
\label{Finite-sample recovery theory}

We analyze reconstruction when the latent trajectories and kinetic parameters are estimated rather than known exactly. The resulting problem is a perturbed sparse inverse problem in which both the response and the design matrix are estimated. Our analysis is conditional on the upstream estimators and requires only bounds on the perturbations they induce.

\subsection{Perturbed inverse problem}

Let $X$ and $y_g$ denote the exact integral design matrix and response vector constructed from the true trajectories and kinetic parameter $\beta_g$. By the integral identity,
\[
y_g=X\theta_g^\star,
\qquad
\theta_g^\star=
\begin{pmatrix}
c_g^\star\\
A_{g\cdot}^{\star\top}
\end{pmatrix}.
\]
Write the empirical quantities as
\[
\widehat X=X+\Delta_X,
\qquad
\widehat y_g=y_g+\Delta_{y,g}.
\]
Then
\[
\widehat y_g-\widehat X\theta_g^\star
=
\Delta_{y,g}-\Delta_X\theta_g^\star.
\]
Defining
\begin{equation}
\xi_g:=\Delta_{y,g}-\Delta_X\theta_g^\star,
\label{eq:effective-error}
\end{equation}
we obtain the empirical model
\begin{equation}
\widehat y_g=\widehat X\theta_g^\star+\xi_g.
\label{eq:perturbed-inverse}
\end{equation}
Thus errors in the response enter through $\Delta_{y,g}$, whereas errors in the estimated design act through $-\Delta_X\theta_g^\star$.

The empirical Gram matrix is also stable under small design perturbations. Define
\[
\Sigma=\frac{1}{M}X^\top X,
\qquad
\widehat\Sigma=\frac{1}{M}\widehat X^\top\widehat X.
\]
If $\|\Delta_X\|_2\le\delta$, then
\begin{equation}
\lambda_{\min}(\widehat\Sigma)
\ge
\lambda_{\min}(\Sigma)
-
\frac{2\|X\|_2\delta+\delta^2}{M}.
\label{eq:gram-stability}
\end{equation}
The proof is given in Appendix~\ref{app:gram-stability}. In sparse high-dimensional settings, however, global positive definiteness is stronger than necessary; the recovery analysis below requires only restricted curvature.

\subsection{Sparse recovery guarantee}

For a target gene $g$, let $S_g=\operatorname{supp}(A_{g\cdot}^\star)$ and assume $|S_g|\le d$. For $v\in\mathbb R^{K+1}$, let $v_0$ denote its basal-transcription coefficient component and let $v_{S_g}$ and $v_{S_g^c}$ denote its regulatory components on $S_g$ and $S_g^c$, respectively. For example, if $A_{g\cdot}^\star=(0,2,0,-1,0)$, then $S_g=\operatorname{supp}(A_{g\cdot}^\star)=\{2,4\}$. For $v=(v_0,v_1,\ldots,v_5)$, this gives $v_{S_g}=(v_2,v_4)$ and $v_{S_g^c}=(v_1,v_3,v_5)$.

Assume that the empirical design satisfies the restricted strong convexity condition
\begin{equation}
\frac{1}{M}\|\widehat Xv\|_2^2
\ge
\kappa\|v\|_2^2
\label{eq:rsc}
\end{equation}
for every $v$ in the cone
\begin{equation}
\|v_{S_g^c}\|_1
\le
3\|v_{S_g}\|_1+|v_0|.
\label{eq:rsc-cone}
\end{equation}

\begin{theorem}[Sparse recovery bound]
\label{thm:finite_recovery}
Suppose that $A_{g\cdot}^\star$ is $d$-sparse, \eqref{eq:rsc} holds with $\kappa>0$, and
\begin{equation}
\left\|
\frac{1}{M}\widehat X^\top\xi_g
\right\|_\infty
\le
\frac{\lambda}{2}.
\label{eq:score-condition}
\end{equation}
Then the estimator defined in \eqref{eq:integral-lasso} satisfies
\begin{equation}
\|\widehat\theta_g-\theta_g^\star\|_2
\le
\frac{\sqrt{9d+1}}{\kappa}\lambda.
\label{eq:recovery-bound}
\end{equation}
\end{theorem}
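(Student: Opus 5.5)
The plan is the standard basic-inequality argument for the Lasso, adapted to the unpenalized basal coefficient. Let $\widehat\theta_g$ be any minimizer of \eqref{eq:integral-lasso}, and set $h=\widehat\theta_g-\theta_g^\star$ with components $h_0$, $h_{S_g}$, $h_{S_g^c}$ as defined before \eqref{eq:rsc-cone}. Optimality of $\widehat\theta_g$ against the feasible point $\theta_g^\star$ gives
\[
\frac{1}{2M}\|\widehat y_g-\widehat X\widehat\theta_g\|_2^2+\lambda\|\widehat\theta_{g,-0}\|_1
\le
\frac{1}{2M}\|\widehat y_g-\widehat X\theta_g^\star\|_2^2+\lambda\|\theta^\star_{g,-0}\|_1 .
\]
By the perturbed model \eqref{eq:perturbed-inverse}, we have $\widehat y_g-\widehat X\widehat\theta_g=\xi_g-\widehat Xh$. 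Expanding the squares then yields
\[
\frac{1}{2M}\|\widehat Xh\|_2^2\le\frac1M\xi_g^\top\widehat Xh+\lambda\bigl(\|\theta^\star_{g,-0}\|_1-\|\widehat\theta_{g,-0}\|_1\bigr).
\]

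Next I bound the two terms on the right. For the cross term, Hölder's inequality and the score condition \eqref{eq:score-condition} give
\[
\Bigl|\tfrac1M\xi_g^\top\widehat Xh\Bigr|\le\tfrac{\lambda}{2}\|h\|_1=\tfrac{\lambda}{2}\bigl(|h_0|+\|h_{S_g}\|_1+\|h_{S_g^c}\|_1\bigr).
\]
For the penalty difference, I use that $\theta^\star_{g,-0}$ is supported on $S_g$ and apply the triangle inequality:
\[
\|\theta^\star_{g,-0}\|_1-\|\widehat\theta_{g,-0}\|_1\le\|h_{S_g}\|_1-\|h_{S_g^c}\|_1 .
\]
Combining the two bounds gives
\[
0\le\frac{1}{2M}\|\widehat Xh\|_2^2\le\frac{\lambda}{2}|h_0|+\frac{3\lambda}{2}\|h_{S_g}\|_1-\frac{\lambda}{2}\|h_{S_g^c}\|_1 .
\]
This has two consequences. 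First, $\|h_{S_g^c}\|_1\le3\|h_{S_g}\|_1+|h_0|$, so $h$ lies in the cone \eqref{eq:rsc-cone}. Second, after dropping the negative term and multiplying by $2$, we get $\frac1M\|\widehat Xh\|_2^2\le\lambda\bigl(|h_0|+3\|h_{S_g}\|_1\bigr)$.

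Since $h$ is in the cone, restricted strong convexity \eqref{eq:rsc} applies and gives
\[
\kappa\|h\|_2^2\le\lambda\bigl(|h_0|+3\|h_{S_g}\|_1\bigr).
\]
Because $|S_g|\le d$, Cauchy--Schwarz gives $|h_0|+3\|h_{S_g}\|_1\le|h_0|+3\sqrt d\,\|h_{S_g}\|_2$. Applying Cauchy--Schwarz once more to the vector $(1,3\sqrt d)$ and $(|h_0|,\|h_{S_g}\|_2)$ gives
\[
|h_0|+3\sqrt d\,\|h_{S_g}\|_2\le\sqrt{1+9d}\,\bigl(h_0^2+\|h_{S_g}\|_2^2\bigr)^{1/2}\le\sqrt{9d+1}\,\|h\|_2 .
\]
Dividing by $\kappa\|h\|_2$ (the case $h=0$ being trivial) yields \eqref{eq:recovery-bound}.

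The only delicate point is the unpenalized intercept. The coordinate $h_0$ contributes to the score bound but receives no compensating term from the penalty. This is why the cone \eqref{eq:rsc-cone} contains the extra $|h_0|$ and why the constant is $9d+1$ rather than $9d$. I expect no further obstacle, provided the constants are tracked exactly as above so that they match the stated bound.
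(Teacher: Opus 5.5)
Your proposal is correct and follows essentially the same route as the paper's proof. Both derive the basic inequality from optimality against $\theta_g^\star$, bound the cross term by H\"older and the score condition, and use the support of $A_{g\cdot}^\star$ to obtain the cone with the extra $|h_0|$. Both then apply restricted strong convexity and two Cauchy--Schwarz steps to reach the constant $\sqrt{9d+1}$.
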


The proof is given in Appendix~\ref{app:sparse-recovery-proof}.

\begin{corollary}[Recovery under stochastic and systematic error]
\label{cor:recovery-error}
Suppose, in addition, that with probability at least $1-\eta$,
\begin{equation}
\left\|
\frac{1}{M}\widehat X^\top\xi_g
\right\|_\infty
\le
C_0
\left(
\sigma\sqrt{\frac{\log K}{M}}
+
\delta_{\mathrm{time}}
+
\delta_{\mathrm{kin}}
+
\delta_{\mathrm{quad}}
+
\delta_{\mathrm{model}}
\right).
\label{eq:aggregate-score-bound}
\end{equation}
Then choosing
\[
\lambda
=
2C_0
\left(
\sigma\sqrt{\frac{\log K}{M}}
+
\delta_{\mathrm{time}}
+
\delta_{\mathrm{kin}}
+
\delta_{\mathrm{quad}}
+
\delta_{\mathrm{model}}
\right)
\]
gives, with probability at least $1-\eta$,
\begin{equation}
\|\widehat\theta_g-\theta_g^\star\|_2
\le
\frac{2C_0\sqrt{9d+1}}{\kappa}
\left(
\sigma\sqrt{\frac{\log K}{M}}
+
\delta_{\mathrm{time}}
+
\delta_{\mathrm{kin}}
+
\delta_{\mathrm{quad}}
+
\delta_{\mathrm{model}}
\right).
\label{eq:aggregate-recovery}
\end{equation}
\end{corollary}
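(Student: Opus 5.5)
The corollary follows by direct substitution into Theorem~\ref{thm:finite_recovery}, applied on a high-probability event. Let $\mathcal E$ be the event on which the aggregate score bound \eqref{eq:aggregate-score-bound} holds; by hypothesis $\mathbb P(\mathcal E)\ge 1-\eta$. The upstream trajectory and kinetic estimates, and hence $\widehat X$ and $\xi_g$, are random, while the analysis of Theorem~\ref{thm:finite_recovery} is deterministic. It therefore suffices to show that every deterministic hypothesis of that theorem holds on $\mathcal E$.

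First, I check the score condition. With the prescribed choice of $\lambda$, the right-hand side of \eqref{eq:aggregate-score-bound} equals exactly $\lambda/2$. On $\mathcal E$ this gives
\[
\Bigl\|\tfrac{1}{M}\widehat X^\top\xi_g\Bigr\|_\infty\le \lambda/2,
\]
which is \eqref{eq:score-condition}.

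Next come the remaining hypotheses. The $d$-sparsity of $A_{g\cdot}^\star$ is a standing assumption, and the phrase ``in addition'' carries over the restricted strong convexity \eqref{eq:rsc} with $\kappa>0$ on the cone \eqref{eq:rsc-cone}. This is the one point needing care. If $\widehat X$ is random, \eqref{eq:rsc} must hold on the same event, or else $\mathcal E$ is intersected with the RSC event. I will read the statement as assuming RSC deterministically, or conditionally on the upstream estimates. Under that reading no further union bound is needed and the probability stays $1-\eta$. I will note this interpretation explicitly in the proof.

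Finally, on $\mathcal E$, Theorem~\ref{thm:finite_recovery} yields $\|\widehat\theta_g-\theta_g^\star\|_2\le \sqrt{9d+1}\,\lambda/\kappa$. Substituting $\lambda$ gives \eqref{eq:aggregate-recovery}, and since $\mathbb P(\mathcal E)\ge1-\eta$ the bound holds with at least that probability. I expect no real obstacle beyond the bookkeeping of which event RSC is assumed on.
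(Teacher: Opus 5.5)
Your proposal is correct and is the same argument the paper relies on; the paper gives no separate proof because the corollary is immediate from Theorem~\ref{thm:finite_recovery}. With the prescribed $\lambda$, the right-hand side of \eqref{eq:aggregate-score-bound} equals $\lambda/2$, so on that event the score condition \eqref{eq:score-condition} holds and substituting $\lambda$ into \eqref{eq:recovery-bound} gives \eqref{eq:aggregate-recovery}; your remark that RSC must hold on the same event (or deterministically, conditional on the upstream estimates) is a sensible clarification the paper leaves implicit in the phrase ``in addition.''
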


\subsection{Interpretation}

The bound \eqref{eq:aggregate-recovery} separates a stochastic term, $\sigma\sqrt{\log K/M}$, from systematic contributions due to latent-time error, kinetic-parameter error, numerical quadrature, and model misspecification. Here $M$ is the number of stacked integral equations rather than the number of measured cells. The stochastic contribution decreases as $M$ increases, whereas the systematic terms need not vanish with additional data.

The constant $\kappa$ measures restricted conditioning of the empirical inverse problem. Perturbations that generate sufficiently diverse regulator trajectories strengthen curvature along sparse parameter directions, while redundant perturbations can leave nearly unobservable directions and amplify reconstruction error. Accurate recovery therefore requires both informative perturbations and sufficiently accurate upstream trajectory and kinetic-parameter estimation.

\section{Numerical experiments}
\label{Numerical experiments}

We illustrate the identifiability, conditioning, and reconstruction results developed above. Unless otherwise stated, the synthetic experiments use
\begin{align}
\dot u_g^{(q)}(t)
&=
c_g+h_g(q)+A_{g\cdot}^{\star}M_qs_{\mathcal R}^{(q)}(t)
-\beta_gu_g^{(q)}(t), \\
\dot s_g^{(q)}(t)
&=
\beta_gu_g^{(q)}(t)-\gamma_gs_g^{(q)}(t).
\label{eq:num_system}
\end{align}
Here $A^\star$ denotes the ground-truth regulatory matrix used to generate the synthetic data. For the identifiability and stability experiments, we set $G=4$, $K=3$, $\mathcal R=\{1,2,3\}$, and take $g=4$ as the target gene. The kinetic and basal-transcription parameters are
\[
\beta=(1,1,1,1.2)^\top,\qquad
\gamma=(0.4,0.4,0.4,0.6)^\top,\qquad
c=(1,1,1,0.8)^\top.
\]
with
\[
A_{1\cdot}^{\star}
=
A_{2\cdot}^{\star}
=
A_{3\cdot}^{\star}
=
(0.15,0.10,0.05),
\qquad
A_{4\cdot}^{\star}
=
(0.8,-0.3,0.5).
\]
We use the common initial conditions
\[
u^{(q)}(0)=(0.10,0.10,0.10,0.20)^\top,
\qquad
s^{(q)}(0)=(0.05,0.05,0.05,0.10)^\top
\]
for all conditions $q$, and simulate on $[0,10]$.

We set $h_g(q)=0$ for all genes and conditions, so that the synthetic perturbations act only through the operators $M_q$. The control condition is $M_0=I_3$. Regulator-specific knockdowns are represented by
\begin{equation}
M_1=\operatorname{diag}(0.2,1,1),
\qquad
M_2=\operatorname{diag}(1,0.2,1),
\qquad
M_3=\operatorname{diag}(1,1,0.2).
\label{eq:num_common_perturbations}
\end{equation}

\subsection{Near-nonidentifiability under control observations}

We first examine how control-only conditioning deteriorates as the regulator trajectories approach rank deficiency. Starting from synchronized regulator dynamics, we perturb the second and third regulatory rows according to
\[
A_{2\cdot}^{(\varepsilon)}
=
A_{1\cdot}^{\star}+\varepsilon d_2,
\qquad
A_{3\cdot}^{(\varepsilon)}
=
A_{1\cdot}^{\star}+\varepsilon d_3,
\]
where
\[
d_2=(0.08,-0.03,0.02),
\qquad
d_3=(-0.05,0.06,-0.01).
\]
The first and target-gene regulatory rows remain fixed at
$A_{1\cdot}^{(\varepsilon)}=A_{1\cdot}^{\star}$ and
$A_{4\cdot}^{(\varepsilon)}=A_{4\cdot}^{\star}$.
We also perturb the common initial conditions according to
\[
u^{(\varepsilon)}(0)
=
u(0)+\varepsilon(0,0.03,-0.02,0)^\top,
\qquad
s^{(\varepsilon)}(0)
=
s(0)+\varepsilon(0,-0.02,0.025,0)^\top.
\]
Thus $\varepsilon\to0$ approaches the rank-deficient control system.

\begin{figure}[t]
    \centering
    \includegraphics[width=\textwidth]{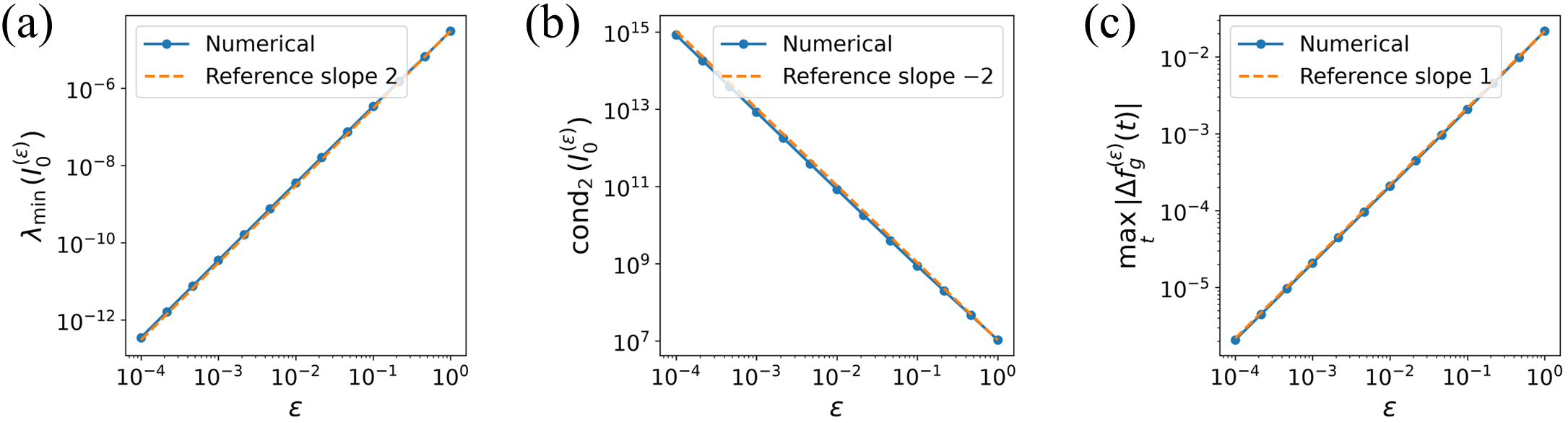}
    \vspace{-2em}
    \caption{Near-nonidentifiability under control observations.
    (a) Smallest eigenvalue of the information matrix.
    (b) Spectral condition number.
    (c) Maximum forcing discrepancy under a fixed parameter perturbation.}
    \label{fig:near_nonidentifiability}
\end{figure}

For $\varepsilon\in[10^{-4},1]$, define the augmented control feature vector
\[
\phi^{(\varepsilon)}(t)
=
\begin{pmatrix}
1\\
s_{\mathcal R}^{(\varepsilon)}(t)
\end{pmatrix}
\]
and compute
\[
I_0^{(\varepsilon)}
=
\int_0^{T_{\max}}
\phi^{(\varepsilon)}(t)
\phi^{(\varepsilon)}(t)^\top\,dt.
\]
As shown in Fig.~\ref{fig:near_nonidentifiability}(a), the smallest eigenvalue satisfies numerically
\[
\lambda_{\min}\bigl(I_0^{(\varepsilon)}\bigr)
\propto
\varepsilon^{1.985}.
\]
Correspondingly, Fig.~\ref{fig:near_nonidentifiability}(b) shows that
\[
\operatorname{cond}_2\bigl(I_0^{(\varepsilon)}\bigr)
\propto
\varepsilon^{-1.979},
\]
demonstrating rapid deterioration of conditioning near the nonidentifiable limit.

To examine the corresponding weak parameter direction, let
$v_{\min}^{(\varepsilon)}$ be a unit eigenvector associated with
$\lambda_{\min}(I_0^{(\varepsilon)})$ and define
\[
\Delta\theta_g^{(\varepsilon)}
=
\alpha v_{\min}^{(\varepsilon)},
\qquad
\widetilde\theta_g^{(\varepsilon)}
=
\theta_g^\star+\Delta\theta_g^{(\varepsilon)},
\]
where $\alpha>0$ controls the magnitude of the parameter perturbation. In the numerical experiment, we set $\alpha=2$. The resulting forcing discrepancy is
\[
\Delta f_g^{(\varepsilon)}(t)
=
\bigl(\Delta\theta_g^{(\varepsilon)}\bigr)^\top
\phi^{(\varepsilon)}(t).
\]
Although the parameter perturbation has fixed norm
$\|\Delta\theta_g^{(\varepsilon)}\|_2=\alpha$, the maximum forcing discrepancy
$\max_{t\in[0,T_{\max}]}|\Delta f_g^{(\varepsilon)}(t)|$
scales approximately as $\varepsilon^{1.003}$, as shown in
Fig.~\ref{fig:near_nonidentifiability}(c). Moreover,
\[
\|\Delta f_g^{(\varepsilon)}\|_{L^2(0,T_{\max})}^2
=
\alpha^2
\lambda_{\min}\bigl(I_0^{(\varepsilon)}\bigr),
\]
which explains why a fixed parameter perturbation becomes increasingly difficult to distinguish as the system approaches the nonidentifiable limit.

\subsection{Perturbation-assisted identifiability}

We next consider the exactly degenerate system with
$s_1^{(0)}(t)=s_2^{(0)}(t)=s_3^{(0)}(t)$.
Although the regulator trajectories remain synchronized within each condition, the effective regulator states
$M_qs_{\mathcal R}^{(q)}(t)$ generated by
\eqref{eq:num_common_perturbations} explore distinct directions, as shown in
Fig.~\ref{fig:perturbation_identifiability}(a).

For each condition $q$, define the augmented feature vector
\[
\phi^{(q)}(t)
=
\begin{pmatrix}
1\\
M_qs_{\mathcal R}^{(q)}(t)
\end{pmatrix}.
\]
We then define the sequentially aggregated information matrix
\begin{equation}
I_{\mathrm{all}}^{(m)}
=
\sum_{q=0}^{m}
\int_0^{T_{\max}}
\phi^{(q)}(t)\phi^{(q)}(t)^\top\,dt,
\qquad
m=0,1,2,3,
\label{eq:num_sequential_information}
\end{equation}
where $m$ denotes the number of perturbation conditions added to the control, so that $m=0$ corresponds to control only and $m=3$ includes all three perturbations.

The spectra of the sequentially aggregated information matrices are shown in
Fig.~\ref{fig:perturbation_identifiability}(b). Their corresponding ranks for
$m=0,1,2,3$ are
\[
2,\quad 3,\quad 4,\quad 4.
\]
As summarized in Fig.~\ref{fig:perturbation_identifiability}(c), the rank reaches
$K+1=4$ after the second perturbation is added, restoring identifiability. Adding the third perturbation does not further increase the rank, but increases
$\lambda_{\min}\bigl(I_{\mathrm{all}}^{(m)}\bigr)$ from approximately $4.64$ to $10.14$ and decreases
$\operatorname{cond}_2\bigl(I_{\mathrm{all}}^{(m)}\bigr)$ from approximately
$1.13\times10^2$ to $6.51\times10^1$.
Thus additional perturbations can improve conditioning even after structural identifiability has been achieved.

\begin{figure}[t]
    \centering
    \includegraphics[width=\textwidth]{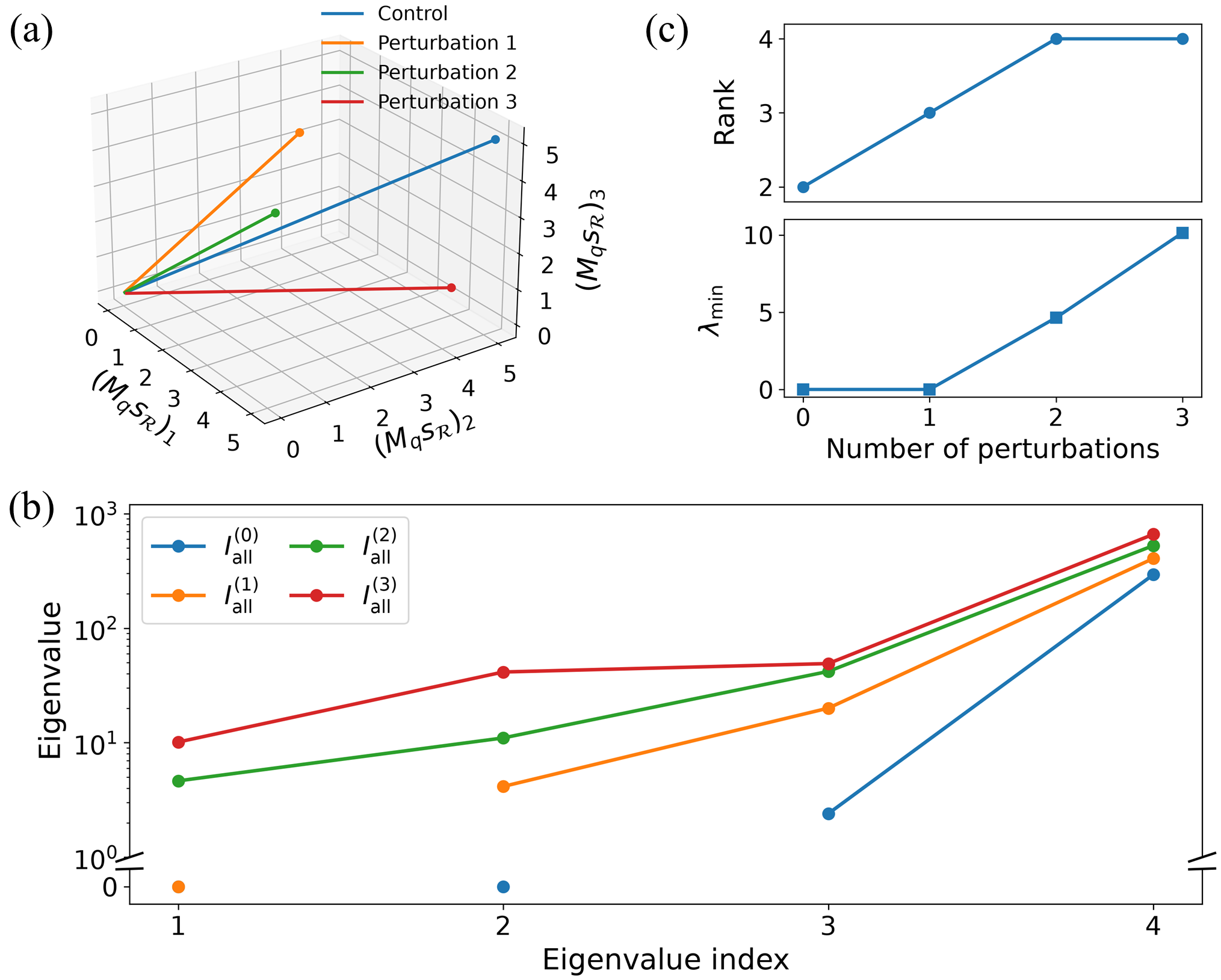}
    \vspace{-2em}
    \caption{Perturbation-assisted identifiability.
    (a) Effective regulator trajectories.
    (b) Spectra of the sequentially aggregated information matrices.
    (c) Rank and smallest eigenvalue as perturbations are added sequentially.}
    \label{fig:perturbation_identifiability}
\end{figure}

\subsection{Stability of integral reconstruction}

We compare differential and integral reconstruction under observation noise using all perturbation conditions above. Independent Gaussian noise is added separately to each unspliced and spliced trajectory component, with standard deviation equal to $\sigma$ times the temporal standard deviation of that component, where
\[
\sigma\in\{0,0.01,0.025,0.05,0.10,0.20\}.
\]
Both methods use the same noisy trajectories, which are smoothed using a Savitzky--Golay filter. For the differential formulation, derivatives are estimated by Savitzky--Golay local polynomial differentiation. The baseline integration-window width for the integral formulation is $\Delta=0.5$. Results are averaged over $50$ independent Monte Carlo realizations.

We first compare the relative equation residuals
\[
R_D=\frac{\|\widehat r_D\|_2}{\|y_D^\star\|_2},
\qquad
R_I=\frac{\|\widehat r_I\|_2}{\|y_I^\star\|_2},
\]
where $\widehat r_D$ and $\widehat r_I$ are the residual vectors obtained by evaluating the differential and integral equations, respectively, on the noisy smoothed trajectories using the ground-truth parameters, and $y_D^\star$ and $y_I^\star$ are the corresponding noise-free reference response vectors. As shown in Fig.~\ref{fig:integral_stability}(a), both residuals increase approximately linearly with the noise level, but the differential residual is roughly $4.6$--$4.7$ times larger over the nonzero noise levels considered. A representative realization at $\sigma=0.1$ is shown in Fig.~\ref{fig:integral_stability}(b), where numerical differentiation substantially amplifies observation noise and produces large fluctuations around the true derivative $\dot u_4(t)$.

We next compare parameter recovery using matched numbers of differential and integral equations. To isolate the effect of the two equation formulations, parameters are estimated by ordinary least squares in both cases. We measure the relative parameter error by
\[
E_\theta
=
\frac{\|\widehat\theta_g-\theta_g^\star\|_2}
{\|\theta_g^\star\|_2}.
\]
As shown in Fig.~\ref{fig:integral_stability}(c), the integral formulation has lower mean parameter error at every nonzero noise level, with the ratio $E_{\theta,D}/E_{\theta,I}$ ranging from $4.06$ to $4.90$, where $E_{\theta,D}$ and $E_{\theta,I}$ denote the relative parameter errors for the differential and integral formulations, respectively. Finally, Fig.~\ref{fig:integral_stability}(d) shows that the integral reconstruction is insensitive to the integration-window width over
\[
\Delta\in\{0.25,0.5,1.0\},
\]
with similar mean parameter errors across the three choices.

\begin{figure}[t]
    \centering
    \includegraphics[width=\textwidth]{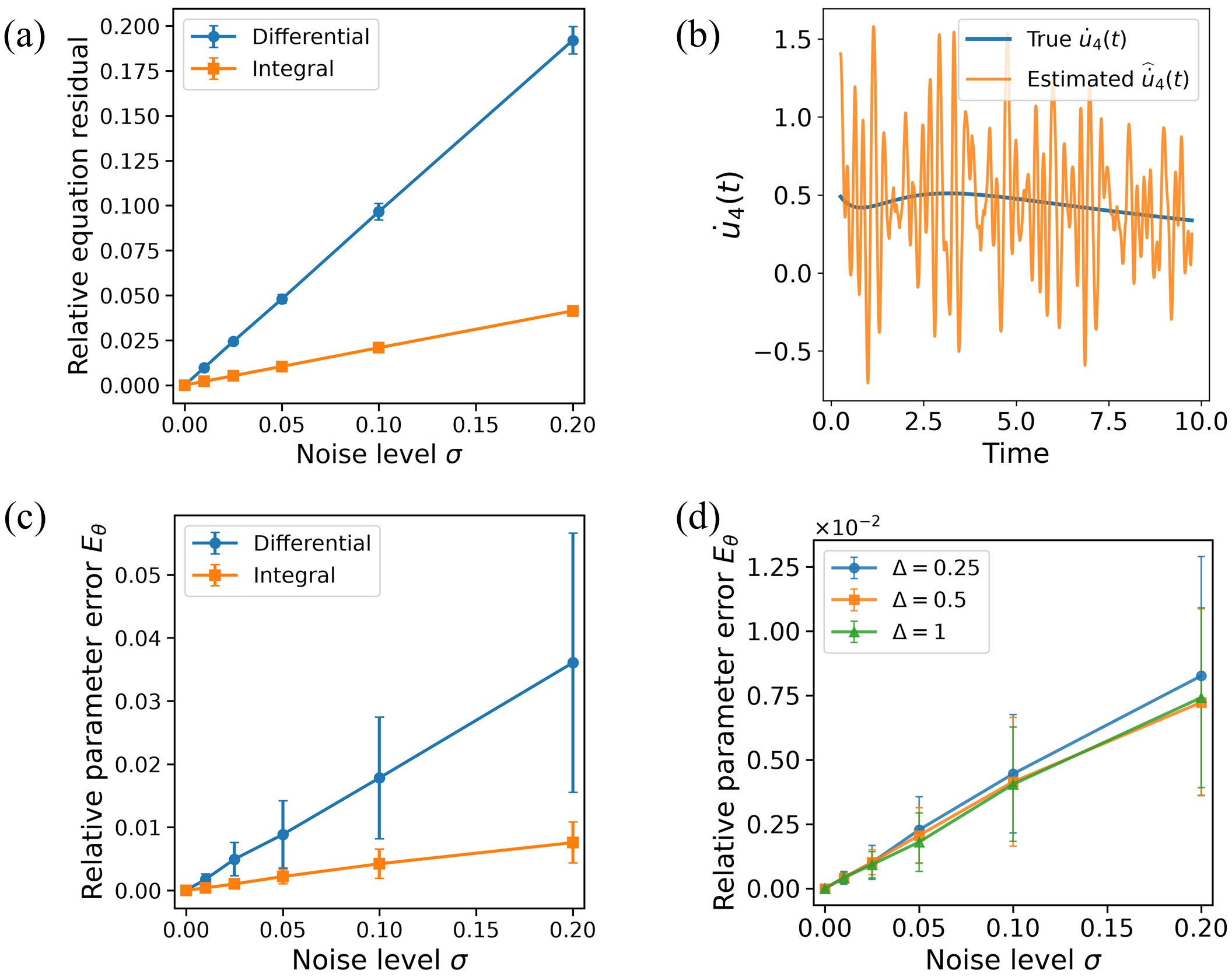}
    \vspace{-2em}
    \caption{Stability of integral reconstruction under observation noise.
    (a) Relative equation residuals for the differential and integral formulations.
    (b) Representative true and numerically estimated target-gene derivative at $\sigma=0.1$.
    (c) Relative parameter errors for the differential and integral formulations with matched equation counts.
    (d) Sensitivity of the integral reconstruction to the integration-window width.}
    \label{fig:integral_stability}
\end{figure}

\subsection{Finite-sample and systematic errors}

We next examine the two error regimes suggested by the recovery theory. First, to isolate stochastic error, we use $K=8$ candidate regulators, $d=3$ active regulatory coefficients, and one regulator-specific knockdown per candidate regulator. Exact trajectories generate a common pool of integral equations with integration-window width $\Delta=0.5$. For
\[
M\in\{50,100,200,400,800\},
\]
we sample $M$ equations without replacement from this pool and add independent Gaussian noise to the response. The noise standard deviation is
\[
\sigma_y=\sigma\,\operatorname{sd}(y^\star),
\qquad
\sigma=0.05,
\]
where $y^\star$ denotes the noise-free response over the full equation pool. The estimator uses
\[
\lambda_M
=
2\sigma_y\sqrt{\frac{\log K}{M}},
\]
with the basal-transcription coefficient unpenalized.

Over $100$ Monte Carlo realizations, the relative parameter error $E_\theta$, defined above, decreases with $M$, as shown in Fig.~\ref{fig:finite_sample_systematic_error}(a). We fit the empirical errors to the power law
\[
E_\theta(M)\approx C_{\mathrm{fit}}M^{-p},
\]
where $C_{\mathrm{fit}}>0$ is a fitted prefactor and $p>0$ is the fitted decay exponent. The resulting estimate is $\widehat p=0.603$. This empirical scaling is compatible with the $M^{-1/2}$ reference scale in the recovery bound, which is an upper-bound rate rather than an asserted asymptotic law.

\begin{figure}[t]
    \centering
    \includegraphics[width=\textwidth]{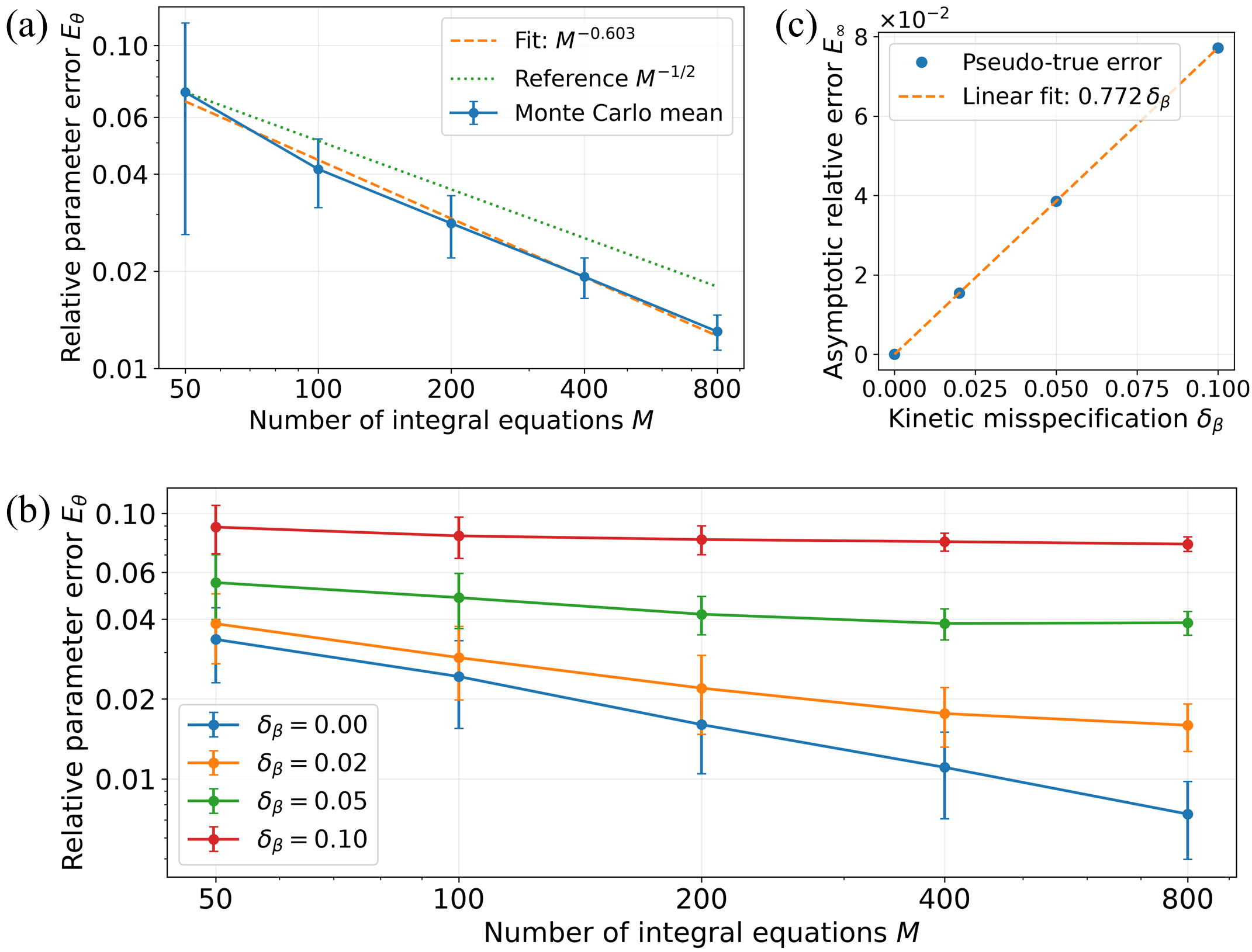}
    \vspace{-2em}
    \caption{Finite-sample and systematic errors in integral reconstruction.
    (a) Finite-sample relative parameter error and power-law scaling with the number $M$ of integral equations.
    (b) Relative parameter error versus $M$ under kinetic misspecification.
    (c) Pseudo-true asymptotic relative error versus the misspecification level $\delta_\beta$.}
    \label{fig:finite_sample_systematic_error}
\end{figure}

We next isolate systematic kinetic error in a separate synthetic inverse problem. Reconstruction is performed with a misspecified degradation rate
\begin{equation}
\widehat\beta_g
=
(1+\delta_\beta)\beta_g^\star,
\qquad
\delta_\beta\in\{0,0.02,0.05,0.10\},
\label{eq:num_beta_misspecification}
\end{equation}
where $\delta_\beta$ denotes the relative kinetic misspecification. For each value of $\delta_\beta$, we vary the number $M$ of sampled equations over the same set of values as above. As shown in Fig.~\ref{fig:finite_sample_systematic_error}(b), increasing $M$ reduces the finite-sample contribution to the reconstruction error. When $\delta_\beta=0$, the error continues to decrease with $M$, whereas for $\delta_\beta>0$ the curves approach nonzero levels determined by the kinetic misspecification.

To characterize these limiting errors, let $X_{\mathrm{pop}}$ denote the full synthetic design matrix used in the systematic-error experiment and let $y_{\delta_\beta}$ denote the corresponding response vector constructed using the misspecified rate $(1+\delta_\beta)\beta_g^\star$. We define the pseudo-true parameter by
\[
\theta_g^\dagger(\delta_\beta)
=
\arg\min_\theta
\|y_{\delta_\beta}-X_{\mathrm{pop}}\theta\|_2^2.
\]
The corresponding asymptotic relative error is
\[
E_\infty(\delta_\beta)
=
\frac{\|\theta_g^\dagger(\delta_\beta)-\theta_g^\star\|_2}
{\|\theta_g^\star\|_2}.
\]
As shown in Fig.~\ref{fig:finite_sample_systematic_error}(c), the pseudo-true error depends approximately linearly on the misspecification level over the tested range,
\[
E_\infty(\delta_\beta)
\approx
0.772\,\delta_\beta.
\]
Thus increasing the number of integral equations suppresses finite-sample reconstruction error but cannot eliminate the bias induced by systematic kinetic misspecification.

\subsection{Perturbation-resolved reconstruction in RPE1 cells}
\label{sec:rpe1}

We next apply the full integral reconstruction framework to perturbation-resolved single-cell RNA-velocity data from the RPE1 CRISPRi Perturb-seq experiment of Replogle et al. \cite{replogle2022mapping}, using VeloCycle-derived cell-cycle phases and kinetic parameters \cite{lederer2024statistical}. The availability of separate spliced and unspliced measurements permits application of the integral construction in Algorithm~5.1. After quality control, the analysis contains 151 perturbation conditions together with a non-targeting control, 151 candidate regulators, and 426 response genes. Cells are binned according to the inferred VeloCycle cell-cycle phase, yielding 1214 usable adjacent-phase integral equations across the 152 conditions. The reconstruction is conditional on the estimated latent trajectories and kinetic parameters. Details of data processing, trajectory construction, perturbation encoding, and numerical implementation are provided in the supplementary materials.

We first examine whether the perturbation panel supplies the excitation required by the identifiability analysis of Section~4. For the complete empirical integral design, including the basal-transcription column, the design matrix has full rank \(152\). The scaled information matrix has smallest eigenvalue \(\lambda_{\min}=2.67\times10^{-3}\) and condition number \(5.23\times10^{4}\). Thus the complete perturbation panel is structurally full rank, although the inverse problem remains nontrivially conditioned.

To examine how identifiability develops as perturbations are added, we generate 100 random perturbation orderings, retaining the non-targeting control throughout, and recompute the empirical rank and smallest information eigenvalue along each ordering. Full rank is first attained after a median of 42 perturbations, with mean \(41.86\) and range \(40\)--\(44\). Beyond the full-rank transition, additional perturbations continue to improve conditioning. The median smallest eigenvalue is approximately \(7\times10^{-6}\) at 50 perturbations and increases to \(2.67\times10^{-3}\) with all 151 perturbations; see Fig.~\ref{fig:rpe1-summary}(a)--(b). These observations are consistent with the theoretical distinction between structural identifiability and practical conditioning. Increasing perturbational diversity can first eliminate rank deficiency and subsequently improve conditioning even after full rank has been reached.

We next apply the integral sparse estimator~(5.6) to the complete design, with the basal-transcription coefficient left unpenalized. Twelve genes occur as both response genes and directly perturbed regulators; their own perturbation equations and self-regulatory predictors require special treatment because the direct intervention contribution is not independently known and same-source confounding would otherwise arise. The corresponding exclusions are described in the supplementary materials.

The resulting regulatory matrix contains 15,247 nonzero coefficients among 64,326 candidate regulator--response pairs, corresponding to a density of \(23.7\%\). To assess whether the selected structure is robust to the particular perturbation panel, we repeat the reconstruction over 100 random subsamples containing 80\% of the perturbation conditions, with the non-targeting control retained throughout. Among edges selected in the full-data reconstruction, 8,616 are recovered in at least 80\% of the subsampled fits, corresponding to 56.5\% of the full-data selected network. Moreover, 6,381 edges are recovered in at least 90\% of the fits, 5,157 in at least 95\%, and 2,999 in every subsample. For edges with selection frequency at least \(0.8\), the median sign agreement across fits in which the edge is selected is \(0.97\). As shown in Fig.~\ref{fig:rpe1-summary}(c), the reconstruction therefore contains a substantial network core that is robust to moderate changes in the available perturbation conditions. This stability measures reproducibility of the inverse reconstruction rather than biological correctness of individual selected edges.

\begin{figure}[t]
    \centering
    \includegraphics[width=\linewidth]{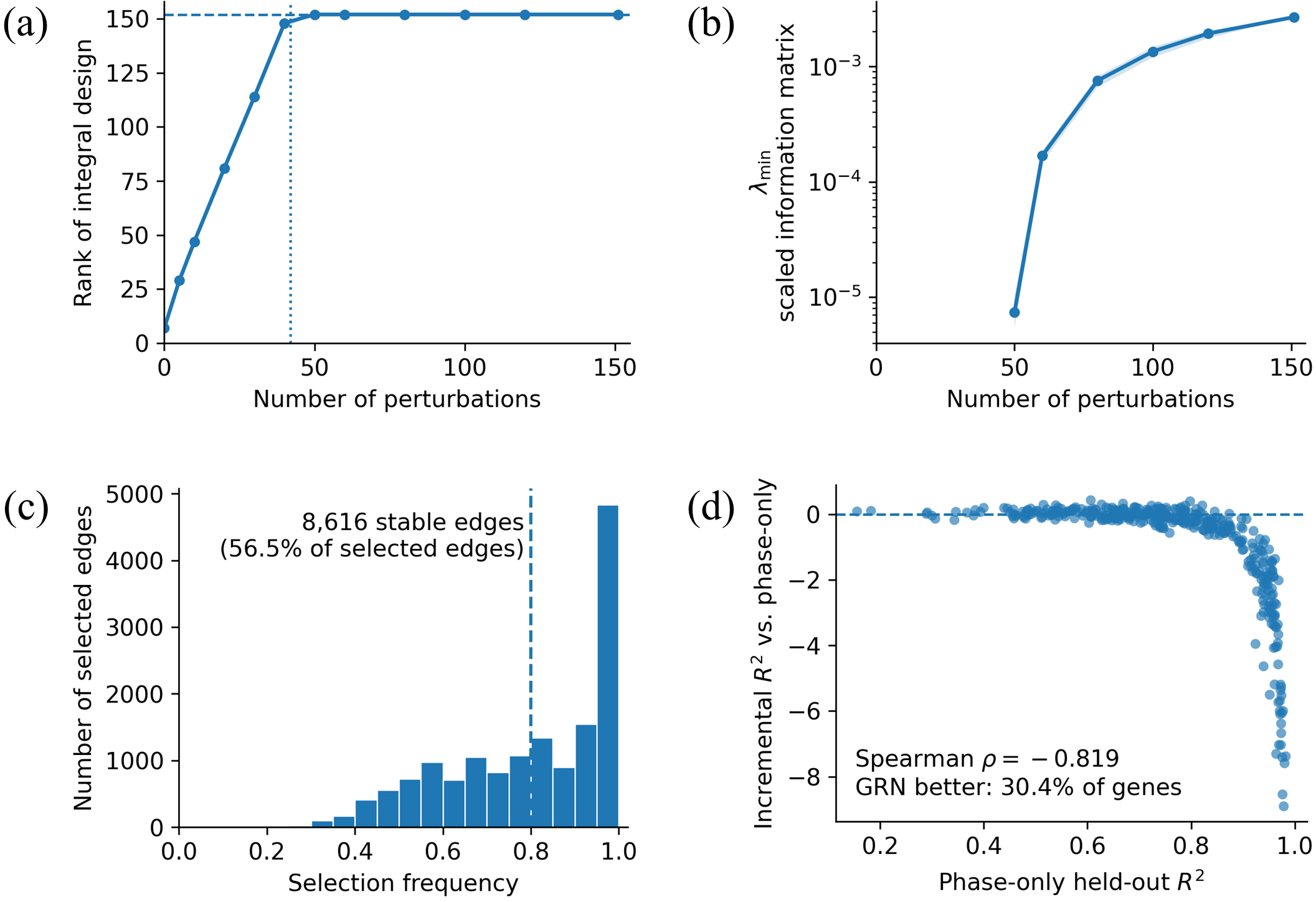}
    \vspace{-2em}
    \caption{Perturbation-resolved integral GRN reconstruction in RPE1 cells. (a) Empirical design rank as perturbation conditions are added across randomized perturbation orderings; reference lines indicate full rank and the median first-full-rank transition. (b) Smallest eigenvalue of the scaled information matrix along the same orderings. (c) Selection-frequency distribution of full-data selected edges under perturbation-condition subsampling; the dashed line marks the stability threshold of \(0.8\). (d) Incremental held-out predictive performance of the reconstructed GRN relative to the phase-only baseline as a function of phase-only predictability; the dashed line denotes zero incremental gain.}
    \label{fig:rpe1-summary}
\end{figure}

Finally, reconstruction stability does not imply predictive generalization to perturbation conditions excluded from estimation. We therefore perform a separate five-fold condition-held-out evaluation. In each fold, the regulatory matrix is estimated from the training perturbation conditions and used to predict the integral responses in the held-out conditions, conditional on their estimated regulator trajectories. The non-targeting condition is retained in the training set throughout. The 31 response genes used for regularization calibration are excluded from the primary evaluation, leaving 395 genes. The complete validation protocol is described in the supplementary materials.

Because RPE1 cells exhibit strong cell-cycle structure, we compare the GRN-based prediction with a phase-only baseline that captures response variation associated with inferred cell-cycle phase without using the reconstructed regulatory matrix. Across the 395 evaluation genes, the mean conventional held-out coefficients of determination are \(R^2_{\mathrm{GRN}}=0.716\) and \(R^2_{\mathrm{phase}}=0.757\), and the GRN model outperforms the phase-only baseline for 30.4\% of the genes. To quantify predictive improvement relative to the phase-only baseline, we define
\[
\Delta R^2_{\mathrm{phase}}
=
1-
\frac{\operatorname{SSE}_{\mathrm{GRN}}}
     {\operatorname{SSE}_{\mathrm{phase}}},
\]
so that \(\Delta R^2_{\mathrm{phase}}>0\) indicates improvement by the GRN model. The incremental score is strongly negatively associated with phase-only predictability, with Spearman correlation \(\rho=-0.819\). In particular, among the 58 genes for which \(R^2_{\mathrm{phase}}\geq0.95\), the GRN reconstruction does not outperform the phase-only baseline for any gene. Figure~\ref{fig:rpe1-summary}(d) therefore shows that regulatory information provides additional held-out predictive value for a subset of responses, but the present reconstruction does not generally improve upon the shared cell-cycle trajectory for strongly phase-dominated genes.

Taken together, the RPE1 experiment distinguishes three empirical properties of the inverse problem and its reconstruction. Increasing perturbational diversity leads to an empirically full-rank design and continues to improve conditioning beyond the full-rank transition; the resulting sparse integral reconstruction contains a substantial condition-subsampling-stable network core; and held-out evaluation shows that these properties do not imply uniform predictive gains over a strong cell-cycle baseline. The real-data results are therefore consistent with the proposed perturbation-assisted identifiability mechanism while also identifying strongly phase-dominated responses as an important limitation of conditional out-of-condition generalization.

\section{Discussion}
\label{sec:discussion}

We have formulated gene regulatory network reconstruction from RNA velocity as a sparse dynamical inverse problem. The main conclusion is that dynamical information alone does not ensure recoverability. Identifiability depends on whether the regulator trajectories sufficiently excite the unknown parameter directions. Control-only nonidentifiability is therefore structural rather than algorithmic, and cannot in general be resolved by changing the estimator or regularization scheme. Controlled perturbations can remove invisible directions by enlarging the aggregated feature space, consistent with the broader role of perturbations in network identifiability and experimental design \cite{zak2001simulation,gross2020identifiability}. Our analysis further distinguishes rank from conditioning. Perturbations may first restore structural identifiability and subsequently improve the smallest information eigenvalue without changing rank. This perspective is closely related to optimal experimental design, where informative experiments are selected to improve parameter identifiability or information criteria \cite{chaloner1995bayesian,pukelsheim2006optimal}. The synthetic and RPE1 experiments exhibit this separation, suggesting that perturbation design should account for both identifiability and conditioning.

For reconstruction, the integral formulation avoids numerical differentiation while preserving linear dependence on the regulatory parameters. This approach is related to integral and weak-form methods for data-driven dynamical-system identification, which improve robustness to noisy observations by replacing pointwise derivative estimation with integrated constraints \cite{schaeffer2017sparse,messenger2021weak}. It also belongs to the broader class of sparse system-identification methods in which parsimonious governing equations are recovered from data \cite{brunton2016discovering}. The synthetic experiments show substantially greater stability to observation noise than derivative-based reconstruction. The recovery theory complements this numerical advantage by separating finite-sample stochastic error from systematic contributions due to latent-time uncertainty, kinetic-parameter estimation, numerical quadrature, and model misspecification. Consequently, increasing the number of informative integral equations can reduce stochastic error but need not eliminate errors introduced upstream or by an imperfect dynamical model.

The RPE1 analysis illustrates both the applicability and the limitations of the framework. Increasing perturbational diversity produces an empirically full-rank design and improves conditioning, while perturbation-condition subsampling reveals a reproducible network core. These properties should not, however, be interpreted as validation of individual regulatory edges. Likewise, held-out prediction shows that identifiability and reconstruction stability do not imply uniform predictive improvement over a strong baseline. This distinction is relevant to RNA-velocity-based network inference, where dynamical information has previously been combined with sparse linear models to reconstruct regulatory interactions \cite{aubin2020gene}, as well as to the broader use of RNA velocity for estimating transcriptional dynamics from single-cell snapshots \cite{la2018rna,bergen2020generalizing}. In the present framework, identifiability guarantees uniqueness only within the assumed model and excitation regime; it does not guarantee model correctness or predictive optimality.

Several extensions are natural. Nonlinear transcription models would allow richer regulatory interactions but require corresponding extensions of the identifiability and recovery analysis. More directly, the information-matrix characterization suggests adaptive perturbation design in which interventions are selected to improve poorly constrained parameter directions, connecting the present framework to experimental-design approaches for dynamical and perturbational systems \cite{pukelsheim2006optimal, gross2020identifiability}. Improved latent-time, trajectory, and kinetic estimation would similarly reduce the effective errors entering the inverse problem. Overall, stable GRN reconstruction depends not simply on the number of observed cells, but on the information supplied by reconstructed dynamics, sparsity, and perturbational excitation.

\appendix
\section{Additional proofs}
\subsection{Proof of Eq.~\eqref{eq:gram-stability}}
\label{app:gram-stability}

Recall that $\widehat X=X+\Delta_X$ with $\|\Delta_X\|_2\le\delta$. Expanding the empirical Gram matrix gives
\begin{align*}
\widehat\Sigma-\Sigma
&=\frac{1}{M}\left[(X+\Delta_X)^\top(X+\Delta_X)-X^\top X\right]\\
&=\frac{1}{M}\left(X^\top\Delta_X+\Delta_X^\top X+\Delta_X^\top\Delta_X\right).
\end{align*}
Hence, by the triangle inequality and submultiplicativity of the spectral norm,
\begin{align*}
\|\widehat\Sigma-\Sigma\|_2
&\le \frac{1}{M}\left(\|X^\top\Delta_X\|_2+\|\Delta_X^\top X\|_2+\|\Delta_X^\top\Delta_X\|_2\right)\\
&\le \frac{1}{M}\left(2\|X\|_2\|\Delta_X\|_2+\|\Delta_X\|_2^2\right)\\
&\le \frac{2\|X\|_2\delta+\delta^2}{M}.
\end{align*}

To control the smallest eigenvalue, let $E=\widehat\Sigma-\Sigma$. Since $\Sigma$ and $\widehat\Sigma$ are symmetric, the Rayleigh--Ritz characterization gives
\[
\lambda_{\min}(\widehat\Sigma)=\min_{\|v\|_2=1}v^\top\widehat\Sigma v.
\]
For any $\|v\|_2=1$,
\[
v^\top\widehat\Sigma v
=v^\top\Sigma v+v^\top Ev
\ge \lambda_{\min}(\Sigma)-\|E\|_2,
\]
where $v^\top\Sigma v\ge\lambda_{\min}(\Sigma)$ and
\[
|v^\top Ev|\le \|v\|_2\|Ev\|_2\le\|E\|_2.
\]
Taking the minimum over $\|v\|_2=1$ therefore yields
\[
\lambda_{\min}(\widehat\Sigma)
\ge \lambda_{\min}(\Sigma)-\|\widehat\Sigma-\Sigma\|_2
\ge \lambda_{\min}(\Sigma)-\frac{2\|X\|_2\delta+\delta^2}{M},
\]
which proves \eqref{eq:gram-stability}.

\subsection{Detailed proof of Theorem~\ref{thm:finite_recovery}}
\label{app:sparse-recovery-proof}

\begin{proof}
Let $\Delta=\widehat\theta_g-\theta_g^\star$. By optimality of $\widehat\theta_g$ in \eqref{eq:integral-lasso},
\[
\frac{1}{2M}\|\widehat y_g-\widehat X\widehat\theta_g\|_2^2+\lambda\|\widehat\theta_{g,-0}\|_1
\le
\frac{1}{2M}\|\widehat y_g-\widehat X\theta_g^\star\|_2^2+\lambda\|\theta_{g,-0}^\star\|_1.
\]
Using \eqref{eq:perturbed-inverse}, we have
\[
\widehat y_g-\widehat X\theta_g^\star=\xi_g,
\qquad
\widehat y_g-\widehat X\widehat\theta_g
=
\widehat y_g-\widehat X(\theta_g^\star+\Delta)
=
\xi_g-\widehat X\Delta.
\]
Hence, the optimality inequality becomes
\[
\frac{1}{2M}\|\xi_g-\widehat X\Delta\|_2^2+\lambda\|\widehat\theta_{g,-0}\|_1
\le
\frac{1}{2M}\|\xi_g\|_2^2+\lambda\|\theta_{g,-0}^\star\|_1.
\]
Expanding the squared norm gives
\[
\|\xi_g-\widehat X\Delta\|_2^2
=
\|\xi_g\|_2^2-2\xi_g^\top\widehat X\Delta+\|\widehat X\Delta\|_2^2.
\]
Substituting this expression into the preceding inequality and canceling the common term $\|\xi_g\|_2^2/(2M)$ from both sides yields
\[
\frac{1}{2M}\|\widehat X\Delta\|_2^2-\frac{1}{M}\xi_g^\top\widehat X\Delta+\lambda\|\widehat\theta_{g,-0}\|_1
\le
\lambda\|\theta_{g,-0}^\star\|_1.
\]
Rearranging gives the basic inequality
\begin{equation}
\frac{1}{2M}\|\widehat X\Delta\|_2^2
\le
\frac{1}{M}\xi_g^\top\widehat X\Delta
+\lambda\left(\|\theta_{g,-0}^\star\|_1-\|\widehat\theta_{g,-0}\|_1\right).
\label{eq:app-basic-inequality}
\end{equation}

By H\"older's inequality and the score condition \eqref{eq:score-condition},
\begin{equation}
\frac{1}{M}\xi_g^\top\widehat X\Delta
=
\left(\frac{1}{M}\widehat X^\top\xi_g\right)^\top\Delta
\le
\left\|\frac{1}{M}\widehat X^\top\xi_g\right\|_\infty\|\Delta\|_1
\le
\frac{\lambda}{2}\|\Delta\|_1.
\label{eq:app-score-bound}
\end{equation}
Since $\|\Delta\|_1=|\Delta_0|+\|\Delta_{S_g}\|_1+\|\Delta_{S_g^c}\|_1$, it remains to control the difference of the $\ell_1$ penalties.

Because $S_g=\operatorname{supp}(A_{g\cdot}^\star)$, the true regulatory coefficients vanish on $S_g^c$. Hence
\[
\|\widehat\theta_{g,-0}\|_1
=
\|\theta_{g,S_g}^\star+\Delta_{S_g}\|_1+\|\Delta_{S_g^c}\|_1
\ge
\|\theta_{g,S_g}^\star\|_1-\|\Delta_{S_g}\|_1+\|\Delta_{S_g^c}\|_1,
\]
where the inequality follows from the reverse triangle inequality. Since $\|\theta_{g,-0}^\star\|_1=\|\theta_{g,S_g}^\star\|_1$,
\begin{equation}
\|\theta_{g,-0}^\star\|_1-\|\widehat\theta_{g,-0}\|_1
\le
\|\Delta_{S_g}\|_1-\|\Delta_{S_g^c}\|_1.
\label{eq:app-penalty-bound}
\end{equation}

Substituting \eqref{eq:app-score-bound} and \eqref{eq:app-penalty-bound} into \eqref{eq:app-basic-inequality} yields
\begin{equation}
\frac{1}{2M}\|\widehat X\Delta\|_2^2+\frac{\lambda}{2}\|\Delta_{S_g^c}\|_1
\le
\frac{\lambda}{2}|\Delta_0|+\frac{3\lambda}{2}\|\Delta_{S_g}\|_1.
\label{eq:app-cone-inequality}
\end{equation}
Dropping the nonnegative quadratic term shows that
\[
\|\Delta_{S_g^c}\|_1\le 3\|\Delta_{S_g}\|_1+|\Delta_0|,
\]
so $\Delta$ belongs to the cone \eqref{eq:rsc-cone}. The RSC condition \eqref{eq:rsc} can therefore be applied to $\Delta$. Dropping the nonnegative $\ell_1$ term in \eqref{eq:app-cone-inequality} gives
\begin{equation}
\frac{\kappa}{2}\|\Delta\|_2^2
\le
\frac{\lambda}{2}\left(|\Delta_0|+3\|\Delta_{S_g}\|_1\right).
\label{eq:app-rsc-step}
\end{equation}

Since $|S_g|\le d$, Cauchy--Schwarz gives $\|\Delta_{S_g}\|_1\le\sqrt{d}\,\|\Delta_{S_g}\|_2$. Applying Cauchy--Schwarz once more,
\begin{align*}
|\Delta_0|+3\|\Delta_{S_g}\|_1
&\le |\Delta_0|+3\sqrt{d}\,\|\Delta_{S_g}\|_2\\
&\le \sqrt{9d+1}\left(|\Delta_0|^2+\|\Delta_{S_g}\|_2^2\right)^{1/2}\\
&\le \sqrt{9d+1}\,\|\Delta\|_2.
\end{align*}
Combining this bound with \eqref{eq:app-rsc-step} yields
\[
\frac{\kappa}{2}\|\Delta\|_2^2
\le
\frac{\lambda}{2}\sqrt{9d+1}\,\|\Delta\|_2.
\]
If $\Delta=0$, the result is immediate. Otherwise, dividing by $\|\Delta\|_2/2$ gives
\[
\|\widehat\theta_g-\theta_g^\star\|_2=\|\Delta\|_2
\le
\frac{\sqrt{9d+1}}{\kappa}\lambda,
\]
which proves \eqref{eq:recovery-bound}.
\end{proof}

\section*{AI Assistance and Code Availability}

The initial version of this manuscript was written entirely by the authors. GPT-5.5 was subsequently used to assist with language editing and stylistic refinement. All AI-assisted revisions were carefully reviewed, verified, and, where necessary, further edited by both authors. The authors take full responsibility for the accuracy, originality, and final content of the manuscript.

The code supporting the numerical experiments and data analyses presented in this work is publicly available at \url{https://github.com/lingqime/grn-rna-velocity}.

\bibliographystyle{unsrt}
\bibliography{references}
\end{document}